\numberwithin{equation}{section}%公式随着section来编号
\tikzset{every picture/.style={line width=0.75pt}} %set default line width to 0.75pt
\definecolor{shadecolor}{rgb}{0.92,0.92,0.92}
\pgfplotsset{compat=1.15}
\bf\color{blue},
\it\color[RGB]{0,96,96},
\slshape\color[RGB]{133,0,0},
\declaretheoremstyle[
%spaceabove=6pt, spacebelow=6pt,
%headfont=\normalfont\bfseries,
%notefont=\mdseries, notebraces={(}{)},
bodyfont=\itshape
%postheadspace=1em,
%qed=\qedsymbol
]{mystyle}
\declaretheorem[style=mystyle,name=Theorem]{theorem}
\declaretheorem[style=mystyle,name=Lemma,sibling=theorem]{lemma}
\declaretheorem[style=mystyle,name=Corollary,sibling=theorem]{corollary}
\declaretheorem[style=mystyle,name=Proposition,sibling=theorem]{proposition}
\declaretheoremstyle[
%spaceabove=6pt, spacebelow=6pt,
%headfont=\normalfont\bfseries,
%notefont=\mdseries, notebraces={(}{)},
%postheadspace=1em,
%qed=\qedsymbol
]{mystyle1}
\declaretheorem[style=mystyle,name=Definition,sibling=theorem]{definition}
\declaretheorem[style=mystyle1,name=Remark,sibling=theorem]{remark}
\renewcommand{\@fnsymbol}[1]{\arabic{footnote}}
\begin{document}
	\captionsetup[figure]{labelfont={bf},name={Fig.},labelsep=period}%图片题注格式
	\captionsetup[table]{labelfont={bf},name={Fig.},labelsep=period}%表格题注格式
	\boldmath
	\title{\centering\textbf{An asymptotic slope of the $\alpha$-K-energy for the K\"{a}hler-Yang-Mills equations}}
	\author{By \text{Akito Futaki\footnote{Yau Mathematical Sciences Center, Tsinghua University, Beijing 100084, China, futaki@tsinghua.edu.cn} } and \text{Jianwei Shi\footnote{Department of Mathematical Sciences, Tsinghua University, Beijing 100084, China, shijw23@mails.tsinghua.edu.cn}}}
	%\author{Tsinghua University \ Shi Jianwei \ shijw23@mails.tsinghua.edu.cn}%作者2\thanks{通讯作者\&邮箱：****@mail.***.edu.cn}, 作者3}
\date{\today}
\maketitle
\unboldmath
\begin{abstract}
	For holomorphic vector bundles over compact Kähler manifolds, we establish a formula for the asymptotic slope of the $\alpha$-K-energy associated with the K\"{a}hler-Yang-Mills equations.
\end{abstract}
%\boldmath
%\tableofcontents%做目录
%\unboldmath
\section{Introduction}
\label{sec:1}

In this paper we study a system of partial differential equations that combines the Hermitian-Einstein equation and the cscK equation, namely the K\"{a}hler-Yang-Mills equations, which was introduced in \cite{garcia2011coupled} and \cite{alvarez2013coupled} firstly. More precisely, let $G$ be a compact Lie group, and $G^c$ its complexification, we consider a $G^c$-principal bundle $ P $ (with $G^c$-action on the right) over a compact K\"{a}hler manifold $ X $, endowed with a  $G^c$-equivariant integrable holomorphic structure $I$  such that the projection $\pi:P\to X$ is holomorphic. The Kähler-Yang-Mills equations form a coupled system that relates two geometric quantities on \(X\) and $P$: the scalar curvature \(S_g\) of a Kähler metric \(g\), and the curvature \(F_H\) of the Chern connection (in the sense of \cite{singer1959geometric}) associated with a \(G\)-reduction \(H\) on \((P,I)\), which can be described as follows.

Let $\mathfrak{g}$ and $\mathfrak{g}^c$ be Lie algebras of $G$ and $G^c$ respectively. We fix an inner product $(\cdot,\cdot)$ on $\mathfrak{g}$ which is invariant under the adjoint action, and extend it to a $G^c$-adjoint-invariant symmetric bilinear map $(\cdot,\cdot):\mathfrak{g}^c\otimes\mathfrak{g}^c\to \mathbb{C}$. This induces a $\mathbb{C}$-bilinear product on $\operatorname{ad}P$, the adjoint bundle of $P$ (c.f. \cite[volume 1, III.1]{kobayashinomizu1969foundations}). We then extend this product to a $\mathbb{C}$-bilinear map $\wedge$ on $\Omega^*(\operatorname{ad}P)$ by\begin{align*}
	\wedge: \Omega^p(\operatorname{ad}P)\times \Omega^q(\operatorname{ad}P)&\to \Omega^{p+q},\\
	(a,b)\mapsto a\wedge b,
\end{align*}where $\Omega^p(\operatorname{ad}P)$ is the space of $p$-forms with coefficient in $\operatorname{ad}P$. 

We say that a K\"{a}hler form $\omega$ on $X$ and a $G$-reduction $H$ of $P$ satisfy \textit{the K\"{a}hler-Yang-Mills equations} with coupling constant $\alpha_0,\alpha_1\in\mathbb{R}$ if
\begin{align}\label{KYM}
	\begin{cases}
		\Lambda_\omega F_H=z,\\
		\alpha_0S_\omega+\alpha_1\Lambda_\omega^2(F_H\wedge F_H)=c,
	\end{cases}
\end{align}where $\Lambda_\omega$ is the contraction with respect to the K\"{a}hler form $\omega$; $z\in \mathfrak{g}$ is invariant under the adjoint action and thus can be regarded as a constant section of $\operatorname{ad}P$; $S_\omega$ is the scalar curvature of the K\"{a}hler metric respect to $\omega$; and both $z$ and $c$ only depend on the topology of $P$ and  the cohomology class of $\omega$. Note that $c$ is a real number since in fact the curvature $F_H$ takes values in $\mathfrak{g}$, which is a real Lie algebra.

In the classical theory of the Hermitian-Einstein equation and in the theory of the cscK equation, it is well known that the existence of solutions for both equations is closely linked to a corresponding kind of algebraic stability condition. In the theory of Hermitian-Einstein metrics, the Donaldson-Uhlenbeck-Yau theorem (also known as the Kobayashi-Hitchin correspondence, c.f. \cite{kobayashi2014differential, Lubke1983, donaldson1985anti, uhlenbeck1986existence}) states that, for a holomorphic vector bundle $E$ over a compact Kähler manifold, $E$ admits a Hermitian-Einstein metric if and only if it is slope polystable. In the context of the cscK equation, the Yau-Tian-Donaldson (YTD) conjecture (c.f. \cite{donaldson2002scalar, tian1997kahler, yau}) predicts that a polarized algebraic manifold $(X,L)$ is K-polystable if and only if the polarization class $c_1(L)$ admits a Kähler metric of constant scalar curvature, which was proved in the Fano case by \cite{chen2015kahler,chen2015kahler2, chen2015kahler3, tian1997kahler}.

The aim of this paper is to provide	a stability condition from the variational point of view of a functional $\mathcal{M}_I$ called \textit{the $\alpha$-K-energy} constructed in \cite{garcia2011coupled,alvarez2013coupled}. Our first result is the formulation of the $\mathcal{M}_I$. Let $(X,\omega)$ be a compact K\"{a}hler manifold, and $\pi:E\to X$ an irreducible holomoprhic vector bundle over $X$. Consider the space of Kähler potentials, denoted by $\mathcal{H}_{\omega} = \{\varphi\in C^{\infty}(X):\omega_{\varphi} = \omega + dd^c\varphi > 0\},$ and the space of Hermitian metrics on $E$, denoted by $\operatorname{Herm}^+ (E)$. Then, for any $b_i = ( \omega_i, h_i)\in\mathcal{H}_{\omega} \times \operatorname{Herm}^ + (E),i = 0, 1$, we have
\begin{equation}
	\begin{aligned}\label{mytheorem1}
		&\mathcal{M}_I(b_1,b_0)=2\alpha_1\operatorname{M}^{\operatorname{Don}}_{\omega_1}(h_1,h_0)+\alpha_0\operatorname{M}^{\operatorname{cscK}}(\omega_1,\omega_0)
		\\ &+ \frac{\alpha_1\overline{C}_1-4\alpha_1\overline{C}_2}{(n+1)!}\braket{\varphi_1,...,\varphi_1}_{(\omega_0,...,\omega_0)}-4\pi\alpha_1\frac{1}{(n-1)!}}\braket{\varphi_1,\dots,\varphi_1}_{(\operatorname{ch}_2(E,h_0))(\omega_0,\dots,\omega_0)\\&+ 8\pi\alpha_1\frac{\mu(E)}{n!\operatorname{vol}_\omega}\braket{0,\varphi_1,\varphi_1,...,\varphi_1}_{(c_1(E,h_0),\omega_0,...,\omega_0)},
	\end{aligned}
\end{equation}where $\operatorname{M}^{\operatorname{Don}}_{\omega_1}$ is the Donaldson functional with respect to the K\"{a}hler form $\omega_1$, $\operatorname{M}^{\operatorname{cscK}}$ is the Mabuchi K-energy,  $\overline{C}_1,\overline{C}_2$ are topological invariants, and $\varphi_1$ is a smooth function such that $\omega_1=\omega_0+dd^c\varphi_1$. The symbols $\braket{\cdot,\dots,\cdot}_{(\cdot,\dots,\cdot)}$ and $\braket{\cdot,\dots,\cdot}_{(\cdot)(\cdot,\dots,\cdot)}$ represent the multivariate energy functional and its modified version, respectively defined by (\ref{delignepairing}) and (\ref{modified}).

This formula inspires us to analyze the $\alpha$-K-energy by splitting it into two parts. We will refer to $\alpha_0\operatorname{M}^{\operatorname{Don}}_{\omega_1}(h_1,h_0)$ as the `HE' component, denoted by $\mathcal{M}^{\operatorname{HE}}(b_1,b_0)$, and the rest as the `cscK' component, denoted by $\mathcal{M}^{\operatorname{cscK}}(b_1,b_0)$. Note that if $h_1=h_0$, then $\mathcal{M}^{\operatorname{cscK}}(b_1,b_0)=\mathcal{M}_I(b_1,b_0)$.

We first study the asymptotic behaviour of the $\alpha$-K-energy when the Hermitian metric $h$ is fixed. Along smooth $\mathcal{H}_\omega$-subgeodesic ray compatible (in sense of \cite{sjostrom2018k}) with ample algebraic test configuration for a polarized complex manifold $(X,L)$, the asymptotic slope of $\mathcal{M}_I$ can be represented as the intersection number of cohomology classes plus the non-Archimedean Mabuchi functional. To be more precise, we establish the following result:\begin{theorem}\label{my}
	Let $(X,L)$ be a polarized complex manifold, $(\mathcal{X},\mathcal{L})$ be an ample test conﬁguration for $(X,L)$ which is smooth and dominates $X\times \mathbb{P}^1$ with birational morphism $\mu$, and $\pi:E\to X$ an irreducible holomorphic vector bundle over $X$. Then for any Hermitian metric $h$ of $E$, and any smooth strictly positive metric $\Phi$ on $\mathcal{L}$ near the central ﬁber, we have
	\begin{equation}\label{McscK}
		\begin{aligned}
			\lim_{t\to\infty}\frac{\mathcal{M}_I(b_t,b_0)}{t}&=\alpha_0\operatorname{M}^{\operatorname{NA}}(\mathcal{X}, \mathcal{L})-4\pi\alpha_1\frac{1}{(n-1)!}(\mu^*\operatorname{pr}_1^*\operatorname{ch}_2(E)\cdot \mathcal{L}\cdot\dots\cdot \mathcal{L}) \\&\quad\quad+8\pi^2\alpha_1\frac{\mu(E)}{n!\operatorname{vol}_\Omega}(\mu^*\operatorname{pr}_1^*c_1(E)\cdot \mathcal{L}\cdot\dots\cdot \mathcal{L})\\& \quad\quad+\frac{\alpha_1\overline{C}_1-4\alpha_1\overline{C}_2}{(n+1)!}(\mathcal{L}\cdot\dots\cdot \mathcal{L}),
			%\lim_{t\to\infty}\frac{\mathcal{M}_I(b_t,b_0)}{t}&=\alpha_0\operatorname{M}^{\operatorname{NA}}(\mathcal{X}, \mathcal{A})-8\pi^2\alpha_1\frac{1}{(n-1)!}(\mu^*\operatorname{pr}_1^*\operatorname{ch}_2(E)\cdot \mathcal{A}\cdot\dots\cdot \mathcal{A}) \\&\quad\quad+16\pi^2\alpha_1\frac{\mu(E)}{(n-1)!\operatorname{vol}_\Omega}(\mu^*\operatorname{pr}_1^*c_1(E)\cdot \mathcal{A}\cdot\dots\cdot \mathcal{A}),
		\end{aligned}
	\end{equation}where $b_t = (\omega_t, h)$ is a ray of pairs consisting of K\"{a}hler form in $c_1(L)$ and Hermitian metric on $E$, with $\omega_t = dd^c\phi_t$ for the ray of smooth positive metrics $\phi_t$ on $L$ corresponding to $\Phi$. Here, $\operatorname{M}^{\operatorname{NA}}(\mathcal{X},\mathcal{L})$ denotes the non-Archimedean Mabuchi functional of the test configuration $(\mathcal{X},\mathcal{L})$ (c.f. \cite{boucksom2017uniform,boucksom2019}).%, the intersection numbers on the right hand side is given by pulling back the cohomology classes $\operatorname{pr}_1^*\operatorname{ch}_2(E)$ and $\operatorname{pr}_1^*c_1(E)$ (on trivial test configuration $X\times\mathbb{P}^1$) to the smooth test configuration $\mathcal{X}$.
\end{theorem}
For the `HE' part, as analogous to the test configuration, we consider filtration of subbundles or subsheaves. As shown in \cite[Theorem 3.3]{jonsson2022geodesicraysdonaldsonuhlenbeckyautheorem}, any filtration of subbundles induces a type of smooth $\operatorname{Herm}^+(E)$-geodesic ray. Along any such ray, one can explicitly compute the curvature and then Donaldson's functional.

The only difference in our case is that, in the `HE' part $\mathcal{M}^{\operatorname{HE}}(b_t,b_0)=\alpha_0\operatorname{M}^{\operatorname{Don}}_{\omega_t}(h_t,h_0)$, the base K\"{a}hler space $(X,\omega_t)$ changes with $h_t$. Thanks to \cite[Theorem 3.3]{jonsson2022geodesicraysdonaldsonuhlenbeckyautheorem}'s calculations, we can overcome this difficulty and get:
\begin{theorem}\label{mytheorem2}
	Suppose that $(\omega_t)_{t\geq 0}$ is a smooth ray in the space $\mathcal{K}$ of K\"{a}hler forms in a given cohomological class $\alpha$. Let $h_0$ be a hermitian metric on $E$ and
	\[
	0 =: E_{m+1} \subset E_m \subset \dots \subset E_1 := E
	\]
	a filtration of $E$ by holomorphic subbundles. Let $F_i := E_i/E_{i+1}$, $1 \leqslant i \leqslant m$. Then there exists $w \in \operatorname{Herm}(E,h_0)$, such that the geodesic ray of hermitian metrics $h_t := e^{tw}h_0$, $t \geqslant 0$, satisfies:
	\begin{equation}\label{MDon}
		\lim_{t \to \infty} \frac{\operatorname{M}_{\omega_t}^{\operatorname{Don}}(h_t, h_0)}{t} = \sum_{k=1}^m 2\pi(m - k + 1)\operatorname{rank}(F_k)(\mu_{F_k} - \mu_E).
	\end{equation}
	Combining this result with Theorem \ref{my} will give us the complete slope of $\mathcal{M}_I$ along the $\mathcal{H}_\omega\times \operatorname{Herm}^+(E)$-ray $b_t=(\omega_t,h_t)$, i.e.\begin{equation}\label{finial}
		\begin{aligned}
			\lim_{t \to \infty} \frac{\mathcal{M}_I(b_t, b_0)}{t} =&\ 4\pi\alpha_1\sum_{k=1}^m (m - k + 1)\operatorname{rank}(F_k)(\mu_{F_k} - \mu_E)+\alpha_0\operatorname{M}^{\operatorname{NA}}(\mathcal{X}, \mathcal{L})
			\\&-4\pi\alpha_1\frac{1}{(n-1)!}(\mu^*\operatorname{pr}_1^*\operatorname{ch}_2(E)\cdot \mathcal{L}\cdot\dots\cdot \mathcal{L}) \\&+8\pi\alpha_1\frac{\mu(E)}{n!\operatorname{vol}_\Omega}(\mu^*\operatorname{pr}_1^*c_1(E)\cdot \mathcal{L}\cdot\dots\cdot \mathcal{L})\\&+\frac{\alpha_1\overline{C}_1-4\alpha_1\overline{C}_2}{(n+1)!}(\mathcal{L}\cdot\dots\cdot \mathcal{L}).
		\end{aligned}
	\end{equation}
\end{theorem}
Finially note that the first equation in $(\ref{KYM})$ is just the Hermitian-Einstein equation. Therefore, if a solution to $(\ref{KYM})$ exists, then $E$ should be slope-stable. This means that the slope $(\ref{MDon})$ of the `HE' part in $\mathcal{M}_I$ should be positive. However, the slope (\ref{McscK}) of the `csck' part may not be non-negative when a solution exists. It is unreasonable to expect that $\mathcal{M}^{\operatorname{cscK}}$ is convex along the weak geodesic in $\mathcal{H}_\omega$, because it involves $E$'s topological invariant. Actually, a weak geodesic for (\ref{KYM}) should be a weak solution of \cite[(3.128)]{alvarez2013coupled}. This suggests that considering only $\mathcal{H}_\omega$-ray might be insufficient, and it is necessary to provide the complete slope (\ref{finial}) of $\mathcal{M}_I$ along $\mathcal{H}_\omega\times \operatorname{Herm}^+(E)$-ray.
\paragraph{Organisation of the paper.} We begin by recalling some backgrounds on the K\"{a}hler-Yang-Mills equations \eqref{KYM} and by reviewing the definition of the $\alpha$-K-energy in section \ref{sec:2}. In section \ref{sec:3}, we introduce the key tool used in our paper, namely the \textit{(modified) multivariate energy functional}, which is introduced in \cite{sjostrom2018k} and extended by \cite{Dervan2023}. At the end of section \ref{sec:3}, we discuss the asymptotic slope of the modified multivariate energy functional. To introduce this result, we also recall two notions from \cite{sjostrom2018k}: \textit{cohomological test configurations} (which extend the concept of algebraic test configurations) and \textit{$\mathcal{C}^\infty$-compatibility}. In section \ref{sec:4} we fix a particular choice of path in order to compute the value of the $\alpha$-K-energy, by which we decompose the $\alpha$-K-energy into four parts $Q_1',Q_2',\operatorname{M}',\operatorname{M}^{\operatorname{cscK}}.$ This leads to expressions for $Q_1'$ and $Q_2'$ written in terms of the modified multivariate energy functional, which are analogous to the Chen-Tian formula \cite{chen2000lower}. We also relate $\operatorname{M}'$ to the Donaldson functional by presenting four equivalent characterizations of the space of Hermitian metrics on an irreducible holomorphic vector bundle. In section \ref{subsec:7} we combine the results obtained in section \ref{sec:3} and \ref{sec:4} to deduce our formula (\ref{mytheorem1}). We then apply this approach to the algebraic case considered in \cite{boucksom2019} to establish Theorem \ref{my}, alongside investigating a cohomological case as Theorem \ref{mycor}. Finally, in section \ref{subsec:8}, with the help of \cite{jonsson2022geodesicraysdonaldsonuhlenbeckyautheorem}, we evaluate the slope of the `HE' part, and bring together results in preceding sections to yield Theorem \ref{mytheorem2}.
\paragraph{Acknowledgments}We would like to thank Mattias Jonsson for directing our notice to \cite{jonsson2022geodesicraysdonaldsonuhlenbeckyautheorem}. We would also like to express our gratitude to Mario Garc\'{i}a-Fern\'{a}ndez and Oscar Garc\'{i}a-Prada for their support and interest in this project.

\boldmath
\section{Coupled K\"{a}hler-Yang-Mills equations and $\alpha$-K-energy}
\unboldmath
\label{sec:2}
% Always give a unique label
% and use \ref{<label>} for cross-references
% and \cite{<label>} for bibliographic references
% use \sectionmark{}
% to alter or adjust the section heading in the running head
In this section let us recall some basic facts of K\"{a}hler-Yang-Mills equations from the foundational work \cite{alvarez2013coupled}. Let $(X,\check{I})$ be a compact complex manifold of dimension $n$ for which $\Omega\in H^2(X,\mathbb{R})$ is a K\"{a}hler class, $G$ a real compact Lie group with complexification $G^c$, and denote their Lie algebras by $\mathfrak{g}$ and $\mathfrak{g}^c$ respectively. Let $P$ be a principle $G^c$-bundle ($G^c$-action on the right) with integrable $G^c$-equivariant almost complex structures $I$ such that the projection $\pi:P\to X$ is holomorphic, which means that the complex structures $I$ can descend to $\check{I}$ on $X$ in such a way that $d\pi\circ I=\check{I}\circ d\pi.$

Let $\mathcal{R}$ be the space of all $G$-reductions of $P$. It is identified with the space $H^0(P/G)$
of sections of $P/G \to X$, via the bijection $H\in H^0(P/G)\mapsto P_H=p^{-1}(H(X))$, where $p:P\to P/G$ is the natural projection. Note that $(\cdot,\cdot)$ in the introduction can also induce an $\mathbb{R}$-bilinear product on $\Omega^*(\operatorname{ad}P_H)$, which is the restriction of $\wedge$ on $\Omega^*(\operatorname{ad}P)$ if we embed $\Omega^*(\operatorname{ad}P_H)$ to $\Omega^*(\operatorname{ad}P)$ by $P_H\subset P$. Actually $P/G$ is a $\mathfrak{g}$-affine bundle on $X$ that is isomorphic to the vector bundle $\operatorname{ad}P_H$ when we fix $H$ as zero section (see \cite[(3.120)]{alvarez2013coupled}), and thus $\mathcal{R}$ is an affine space and contractible.

Giving a $G$-reduction $H\in\mathcal{R}$ is equivalent to specifying a `Hermitian metric' on $P$. Such a reduction, together with holomorphic structure $I$, can induce a Chern connection in the sense of \cite{singer1959geometric}. We denote this connection by $A_H$. It can be reduced to the subbundle $P_H\subset P$ and its curvature, written as $F_H\in \Omega^2(\operatorname{ad}P)$ for the latter, is a  $(1,1)$-form. Actually such relation between reductions, holomorphic structures and connections, gives a biholomorphic map from $Z_b$ to $\mathcal{P}_b$, once given $b=(\omega,H)$, a pair of a symplectic form on $X$ and a $G$-reduction. Here $Z_b$ is the space of all $G^c$-equivariant integrable holomorphic structures  on $P$ whose descents  on $X$ is are compatible with $\omega$; and $\mathcal{P}_b$ is the space of all pairs $(J,A)$ such that $J$ is an integrable holomorphic structure on $X$ compatible with $\omega$ and $A$ is a connection on the $G$-reduction $P_H$ of $P$ (see \cite[Lemma 3.1]{alvarez2013coupled}). Both $Z_b$ and $\mathcal{P}_b$ have holomorphic structures in the sense of \cite[(3.62)]{alvarez2013coupled} and \cite[(2.45)]{alvarez2013coupled}.

The space $\mathcal{K}_{{\check{I}}}$ of K\"{a}hler forms on $(X,\check{I})$ representing the cohomology class $\Omega$ is also contractible, modelled on $\mathcal{H}_\omega/\mathbb{R}$, where  $\mathcal{H}_\omega:=\{\varphi\in\mathcal{C}^\infty(X):\omega_\varphi:=\omega+dd^c\varphi>0\}$, once we fix some $\omega\in \mathcal{K}_{{\check{I}}}$ as a base point. Recall that $\mathcal{H}_\omega$ is convex in $\mathcal{C}^\infty(X)$, and consists of all strictly $\omega$-psh functions. As is well-known, the space $\mathcal{H}_\omega$ carries a natural structure of an infinite-dimensional symmetric space, with the Levi-Civita connection determined by the $L^2$-metric on K\"{a}hler potentials. By \cite[Theorem 3.6]{alvarez2013coupled}, the product space $B_I:=\mathcal{K}_{\check{I}}\times \mathcal{R}$ also carries an infinite-dimensional symmetric space structure, induced by a connection $\nabla$ on a principal bundle lying over it. This construction is the analogue of \cite[Section 4, Proposition 2]{donaldson1999symmetric}.

Fix a coupling constants $\alpha=(\alpha_0,\alpha_1),\alpha_i>0,i=1,2$, we say a pair $(\omega,H)\in B_I$ satisﬁes the K\"{a}hler-Yang-Mills equations if \eqref{KYM} holds for $(\omega,H)$. 
For the existence of such pair of solutions, \cite{alvarez2013coupled} introduced a functional analogous to the Mabuchi K-energy (see Proposition 3.10 and (3.127) there), providing a variational framework 
for the equations \eqref{KYM}:\begin{theorem}[\text{\citenum{alvarez2013coupled}, Proposition 3.10}]\label{alphaKenergy}Let the functional $\mathcal{M}_{I}(\cdot,\cdot):B_I\times B_I\to \mathbb{R}$ defined by
	\begin{align*}
		\mathcal{M}_{I}\left((\omega_1,H_1),(\omega_0,H_0)\right)=&\ 4\alpha_1\int_{0}^{1}\int_X\sqrt{-1}\dot{H}_s\wedge (\Lambda_{\omega_s}F_{H_s}-z)\frac{\omega_s^n}{n!}\wedge ds\\&+\frac{1}{2\pi}\int_{0}^{1}\int_X\dot{\varphi}_s(S_\alpha(\omega_s,H_s)-\overline{S}_\alpha)\frac{\omega_s^n}{n!}\wedge ds,
	\end{align*}
	where $(\omega_s,H_s),s\in [0,1]$ is a path from $(\omega_0,H_0)$ to $(\omega_1,H_1)$, $\varphi_s$ is a ray of smooth function such that $\omega_s=\omega_0+dd^c\varphi_s$, the coupled curvature $S_\alpha(\omega_s,H_s)$ is given by\begin{align*}
		S_\alpha(\omega,H):=-\alpha_0S_{\omega}-\alpha_1\Lambda_{\omega}^2(F_H\wedge F_H)+4\alpha_1\Lambda_\omega F_H\wedge z.
	\end{align*}and $\overline{S}_\alpha=\frac{\int_XS_\alpha(\omega,H)\omega^n}{\int_X\omega^n}$ is a constant independent of the choice of $\omega$ and $H$. Then the following holds:\\
	(i) $\mathcal{M}_I\left((\omega_1,H_1),(\omega_0,H_0)\right)$ is independent of the choice of the path $(\omega_s,H_s),s\in [0,1]$.\\
	(ii) Suppose that $(B_I,\nabla)$ is geodesically convex. If \eqref{KYM} has a solution, then for any fixed $(\omega_0,H_0)$, the functional $\mathcal{M}_I(\cdot,(\omega_0,H_0))$ is bounded from below.
\end{theorem}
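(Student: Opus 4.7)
The statement packages two essentially separate assertions -- path independence of the line integral defining $\mathcal{M}_I$ in (i), and a lower bound under the existence of a KYM solution in (ii) -- which I would prove by different methods.

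For (i), the plan is to recast the defining integrand as a 1-form on $B_I$ and show it is closed. Writing
\[\sigma_{(\omega,H)}(\dot\varphi,\dot H) := 4\alpha_1\int_X\sqrt{-1}\,\dot H\wedge(\Lambda_\omega F_H - z)\frac{\omega^n}{n!} + \int_X \dot\varphi\, S_\alpha(\omega,H)\frac{\omega^n}{n!},\]
one has $\mathcal{M}_I((\omega_1,H_1),(\omega_0,H_0)) = \int_0^1\sigma_{b_s}(\dot b_s)\,ds$ for any smooth path $b_s$ in $B_I$. Since $B_I = \mathcal{K}_{\check I}\times \mathcal{R}$ is contractible, path independence is equivalent to $d\sigma = 0$. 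The verification splits into three pieces: the pure $\varphi$-variation recovers the classical computation making the Mabuchi K-energy well defined, with the coupled scalar curvature $S_\alpha$ playing the role of $S_\omega-\bar S$; the pure $H$-variation reduces to the closedness of the 1-form underlying the Donaldson functional for the Hermitian-Yang-Mills equation; and the mixed $\varphi/H$ cross terms cancel after invoking the Chern-Weil identity for $F_H\wedge F_H$, the Bianchi identity $d_{A_H}F_H = 0$, and repeated integration by parts on $X$.

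For (ii), the plan is to exploit the symmetric-space geometry of $(B_I,\nabla)$ from \cite[Theorem 3.6]{alvarez2013coupled}. The definition of $S_\alpha$, together with the factor $\Lambda_\omega F_H - z$ in the HYM-term, forces the Euler-Lagrange equations of $\mathcal{M}_I(\cdot,(\omega_0,H_0))$ to be exactly \eqref{KYM}, so any solution $b_*:=(\omega_*,H_*)$ of \eqref{KYM} is a critical point. The crucial ingredient, analogous to the convexity of the Mabuchi functional along weak geodesics and of the Donaldson functional along geodesics in the space of Hermitian metrics, is that $\mathcal{M}_I(\cdot,b_*)$ is convex along $\nabla$-geodesics in $B_I$; this follows from the moment-map picture underlying \eqref{KYM}. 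Granted geodesic convexity of $(B_I,\nabla)$, any $(\omega,H)\in B_I$ can be joined to $b_*$ by such a geodesic, and convexity together with the vanishing of the first variation at $b_*$ forces
\[\mathcal{M}_I((\omega,H),b_*)\geq 0.\]
Combined with the cocycle identity $\mathcal{M}_I((\omega,H),(\omega_0,H_0))=\mathcal{M}_I((\omega,H),b_*)-\mathcal{M}_I((\omega_0,H_0),b_*)$, which is itself a consequence of (i), this produces the desired lower bound $\mathcal{M}_I(\cdot,(\omega_0,H_0))\geq -\mathcal{M}_I((\omega_0,H_0),b_*)$.

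The main obstacle I foresee is the verification of $d\sigma = 0$ on the mixed $\varphi/H$ terms in (i). A variation of $\omega$ changes both $\Lambda_\omega F_H$ (through the variations of $\Lambda_\omega$ and of $\omega^n/n!$) inside the HYM term, and simultaneously a variation of $H$ changes $\Lambda_\omega^2(F_H\wedge F_H)$ and $\Lambda_\omega F_H\wedge z$ inside the cscK term through the variation of $F_H$. The strategy is to rewrite each cross contribution as the integral of an exact form on $X$ using the Bianchi identity and the fact that $z$ is central in $\mathfrak{g}$, so that the two orderings cancel under Stokes. The convexity statement required in (ii) is a second, independent technical point, but it can be imported directly from the symmetric-space structure of $B_I$ already established in \cite{alvarez2013coupled}.
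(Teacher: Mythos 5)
You should first be aware that the paper itself contains no proof of Theorem \ref{alphaKenergy}: it is quoted verbatim from \cite[Proposition 3.10]{alvarez2013coupled}, so the only meaningful comparison is with the argument in that reference. Measured against it, your outline is essentially the same proof. For (i), viewing the integrand as a $1$-form $\sigma$ on $B_I$ and deducing path-independence from closedness together with contractibility of $B_I=\mathcal{K}_{\check I}\times\mathcal{R}$ is exactly the mechanism used there; the one difference is that \cite{alvarez2013coupled} obtains closedness conceptually, from the moment-map/symmetric-space formalism behind the connection $\nabla$ (Donaldson's framework), whereas you propose a direct verification via Bianchi, Chern--Weil and integration by parts. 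That computation does close up (the pure-$\varphi$ piece is not literally the Mabuchi computation, since $S_\alpha$ depends on $\omega$ through $\Lambda_\omega^2(F_H\wedge F_H)$ and $\Lambda_\omega F_H\wedge z$ as well, but these terms are handled by the same classical arguments that make the Chen--Tian-type twisted energies well defined), so both routes are viable; yours is more elementary but messier, and as written it is a plan rather than a completed verification. For (ii), the chain ``solution $\Rightarrow$ critical point'', convexity along $\nabla$-geodesics, the geodesic convexity hypothesis, and the cocycle identity is also the argument of the reference.

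Two points in your sketch need care. First, a normalization issue: at a solution of \eqref{KYM} one has $S_\alpha\equiv 4\alpha_1(z,z)-c$, a constant which is in general nonzero (its mean $\frac{1}{\op{vol}_\omega}\int_X S_\alpha\,\omega^n/n!$ is a purely topological quantity, and already the $\alpha_0$-part contributes a multiple of the mean scalar curvature). Consequently the first variation of $\mathcal{M}_I(\cdot,b_0)$ at a solution in a direction $(\dot\varphi,\dot H)$ equals $\left(4\alpha_1(z,z)-c\right)\int_X\dot\varphi\,\omega^n/n!$, which vanishes only when $\dot\varphi$ is normalized, say by $\int_X\dot\varphi\,\omega^n=0$; equivalently, the $1$-form $\sigma$ as written does not descend from the space of potentials to $B_I$ unless this normalization is built in. So your claim that the Euler--Lagrange equations of $\mathcal{M}_I$ ``are exactly'' \eqref{KYM} holds only modulo constants in the potential, and the lower-bound argument must be run with normalized potentials (this imprecision is inherited from the statement as transcribed, but a proof has to resolve it). Second, the convexity of $\mathcal{M}_I$ along $\nabla$-geodesics is the genuine analytic content of (ii); you defer it to the symmetric-space structure of \cite{alvarez2013coupled}, which is legitimate as a citation since it is established there by a second-variation/moment-map identity independent of the proposition being proved, but a self-contained proof would have to carry out that computation rather than treat it as automatic.
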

\begin{remark}
	The original definition of $\mathcal{M}_I$ in \cite{alvarez2013coupled, garcia2011coupled} does not contain $\overline{S}_\alpha$, because the curve $(\omega_s,H_s)$ taken there satisfies $\int_X\dot{\varphi}_t\omega^n_t=0$ (i.e. the curve in $B_I=\mathcal{K}_{\check{I}}\times \mathcal{R}$). Here, we extend it to $\mathcal{H}_{\omega_0}\times \mathcal{R}$ and we can prove that $\mathcal{M}_I$ is invariant if we replace $\varphi_t$ by $\varphi_t+C(t)$.
	
	The second term of $\mathcal{M}_I$ also has an additional coefficient $\frac{1}{2\pi}$. This is because the $d^c$ we use is $\frac{\sqrt{-1}}{2\pi}(\bar{\partial}-\partial)$ (same as \cite{boucksom2019}), while in  \cite{alvarez2013coupled}, it is equal to $\sqrt{-1}(\bar{\partial}-\partial)$ (see the proof of \cite[Proposition 3.16]{alvarez2013coupled}). Also, for the Mabuchi energy, the non-Archimedean Mabuchi functional, and the Donaldson-Futaki invariant, we use the definition in \cite{boucksom2019} multiplied by $\frac{\operatorname{vol}_\omega}{2\pi}$.
\end{remark}
The functional $\mathcal{M}_I$ is called the $\alpha$-K-energy, which plays a central role in our paper. We want to provide a new formulation of $\mathcal{M}_I$ using a method similar to \cite{chen2000lower}, and then analyze its slope with the help of \cite{sjostrom2018k} and \cite{jonsson2022geodesicraysdonaldsonuhlenbeckyautheorem}. For later use we write\begin{align*}
	b_s&=(\omega_s,H_s),\\Q_1(b_1,b_0)&:=\frac{1}{2\pi}\int_{0}^{1}\int_X\dot{\varphi}_s\left(\Lambda_{\omega_s}^2(F_{H_s}\wedge F_{H_s})-2\pi\overline{C}_1\right)\frac{\omega_s^n}{n!}\wedge ds,\\Q_2(b_1,b_0)&:=\frac{1}{2\pi}\int_{0}^{1}\int_X\dot{\varphi}_s\left(\Lambda_{\omega_s}F_{H_s}\wedge z-2\pi\overline{C}_2\right)\frac{\omega_s^n}{n!}\wedge ds,\\
	\operatorname{M}(b_1,b_0)&:=\int_{0}^{1}\int_X\sqrt{-1}\dot{H}_s\wedge (\Lambda_{\omega_s}F_{H_s}-z)\frac{\omega_s^n}{n!}\wedge ds,
\end{align*}where $\overline{C}_1=\frac{\int_X\Lambda_{\omega}^2(F_{H}\wedge F_{H})\frac{\omega^{n}}{n!}}{2\pi\int_X\frac{\omega^n}{n!}},\overline{C}_2=\frac{\int_X\Lambda_{\omega}(F_{H}\wedge z)\frac{\omega^{n}}{n!}}{2\pi\int_X\frac{\omega^n}{n!}}$ are constants that are independent of the choice of $\omega$ and $H$. Then one has\begin{align}
	\mathcal{M}_I(b_1,b_0)=4\alpha_1\operatorname{M}(b_1,b_0)+\alpha_0\operatorname{M}^{\operatorname{cscK}}(\omega_1,\omega_0)-\alpha_1Q_1(b_1,b_0)+4\alpha_1Q_2(b_1,b_0),%Sjöström Dyrefelt的文章的Mabuchi能量原公式符号不对
\end{align}where $\operatorname{M}^{\operatorname{cscK}}$ is the Mabuchi K-energy. By the classical theory of cscK equation, $\operatorname{M}^{\operatorname{cscK}}$ is independent of the choice of path (c.f. \cite[Theorem 2.4]{mabuchi1986k}), thus
\begin{corollary}\label{independent}
	$4\operatorname{M}+4Q_2-Q_1$ is independent of the choice of path.
\end{corollary}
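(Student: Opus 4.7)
The plan is essentially algebraic rearrangement combined with two path-independence results that are already on record. Starting from the decomposition displayed just before the corollary,
\begin{align*}
\mathcal{M}_I(b_1,b_0)=4\alpha_1\op{M}(b_1,b_0)+\alpha_0\op{M}^{\op{cscK}}(\omega_1,\omega_0)-\alpha_1 Q_1(b_1,b_0)+4\alpha_1 Q_2(b_1,b_0),
\end{align*}
I would isolate the combination of interest as
\begin{align*}
\alpha_1\bigl(4\op{M}(b_1,b_0)+4Q_2(b_1,b_0)-Q_1(b_1,b_0)\bigr)=\mathcal{M}_I(b_1,b_0)-\alpha_0\op{M}^{\op{cscK}}(\omega_1,\omega_0).
\end{align*}

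Next I would invoke the two path-independence statements already cited in the text. By Theorem \ref{alphaKenergy}(i), the left-hand value $\mathcal{M}_I(b_1,b_0)$ depends only on the endpoints $b_0,b_1\in B_I$, not on the path $(\omega_s,H_s)_{s\in[0,1]}$ chosen to compute it. By the classical theory of the Mabuchi K-energy (\cite[Theorem 2.4]{mabuchi1986k}), the term $\op{M}^{\op{cscK}}(\omega_1,\omega_0)$ depends only on the endpoint K\"ahler forms $\omega_0,\omega_1$, independent of the path in $\mathcal{H}_\omega$ chosen in its definition. Hence the right-hand side of the displayed identity depends only on $b_0$ and $b_1$.

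Since $\alpha_1>0$ by the standing assumption on the coupling constants, I can divide through by $\alpha_1$ to conclude that $4\op{M}+4Q_2-Q_1$ is path-independent as well. There is no genuine obstacle here: the substantive content lies entirely in the two cited independence theorems, and the corollary is a one-line rearrangement. The only point deserving a brief comment in the write-up is that, although the individual terms $Q_1$, $Q_2$, $\op{M}$ involve $(\omega_s,H_s)$ along a common path in $B_I$ and are not separately path-independent in general, the particular linear combination $4\op{M}+4Q_2-Q_1$ is, precisely because its sole obstruction to path-independence is the scalar-curvature term already controlled by the Mabuchi functional.
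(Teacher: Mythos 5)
Your argument is correct and is exactly the paper's: the corollary follows by isolating $\alpha_1(4\op{M}+4Q_2-Q_1)=\mathcal{M}_I-\alpha_0\op{M}^{\op{cscK}}$ and invoking the path-independence of $\mathcal{M}_I$ (Theorem \ref{alphaKenergy}(i)) and of $\op{M}^{\op{cscK}}$ (Mabuchi), then dividing by $\alpha_1>0$. No further comment is needed.
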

\section{Multivariate energy functionals and their asymptotic slope}
\label{sec:3}
\subsection{Multivariate energy functionals and modified multivariate energy functionals}
\label{subsec:2}
We now introduce the key tool employed in our paper, namely the \textit{multivariate energy functional} or {\textit{analytic Deligne pairing},} introduced in \cite{sjostrom2018k} by Sj\"{o}str\"{o}m Dyrefelt. In the theory of the constant scalar curvature K\"{a}hler (cscK) equation, it provides a remarkably refined means of relating the asymptotic slope of Mabuchi functional $\operatorname{M}^{\operatorname{cscK}}$ with $\operatorname{M}^{\operatorname{NA}}$, the non-Archimedean Mabuchi functional (a modiﬁcation of the Donaldson-Futaki invariant, introduced by \cite{boucksom2017uniform,boucksom2019}).

In this section, we assume $X$ is a compact K\"{a}hler manifold of dimension $n$, equipped with a K\"{a}hler form $\omega$.
\begin{definition}[\text{\citenum{sjostrom2018k}, Definition 2.1}]
	Let $\theta_0,\dots,\theta_n$ be closed $(1,1)$-forms on $X$.  
	Define a multivariate energy functional $\braket{\cdot,\dots,\cdot}_{(\theta_0,\ldots,\theta_n)}$  
	on the space 
	$\mathcal{C}^\infty(X)
	\times\dots\times 
	\mathcal{C}^\infty(X)$  
	($n+1$ times) by
	\begin{align}\label{delignepairing}
		\langle \varphi_0,\dots,\varphi_n\rangle_{(\theta_0,\dots,\theta_n)}
		&:= 
		\int_X \varphi_0\,(\theta_1 + dd^c\varphi_1)\wedge\dots\wedge(\theta_n + dd^c\varphi_n)\notag \\&+\int_X \varphi_1\,\theta_0\wedge(\theta_2 + dd^c\varphi_2)\wedge\dots\wedge(\theta_n + dd^c\varphi_n)\notag\\& +\int_X \varphi_2\,\theta_0\wedge\theta_1\wedge(\theta_3 + dd^c\varphi_3)\wedge\dots\wedge(\theta_n + dd^c\varphi_n)
		\notag\\ &+ \dots
		+ \int_X \varphi_n\,\theta_0\wedge\dots\wedge\theta_{n-1}.
	\end{align}
\end{definition}
\noindent This functional enjoys the following properties: 
\begin{proposition}[\text{\citenum{sjostrom2018k}, Proposition 2.3 \& 2.8}]\mbox{}\label{duichen}\\
	(i) $\braket{\cdot,\dots,\cdot}_{(\theta_0,\ldots,\theta_n)}$ is symmetric, i.e. for any $\sigma\in S_{n+1}$, $$\braket{\varphi_{\sigma(0)},\dots,\varphi_{\sigma(n)}}_{(\theta_{\sigma(0)},\dots,\theta_{\sigma(n)})}=\braket{\varphi_0,\dots,\varphi_n}_{(\theta_0,\dots,\theta_n)}.$$
	(ii) Let $(\varphi_i^t)_{t\geq 0}$ be a smooth ray of smooth functions, $\Phi_i:X\times \mathbb{D}^*\to \mathbb{R},\Phi_i(x,e^{-t+\sqrt{-1}s})=\varphi_i^t(x)$. Then\begin{align*}
		dd^c_\tau\braket{\varphi_0^t,\dots,\varphi_n^t}_{(\theta_0,\dots,\theta_n)}=\int_X\bigwedge_{i=0}^n(\operatorname{pr}_1^*\theta_i+dd^c_{(x,\tau)}\Phi_i),
	\end{align*}where $\tau=e^{-t+\sqrt{-1}s}$, $\operatorname{pr}_1:X\times \mathbb{D}^*\to X$ is projection.
\end{proposition}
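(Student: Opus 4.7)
The plan is to prove (i) by reducing to adjacent transpositions and invoking integration by parts on the compact $X$, and to prove (ii) by combining an algebraic telescoping identity for $(1,1)$-forms on $X\times\mathbb{D}^*$ with the fact that fiber integration over $X$ commutes with $dd^c$.

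For (i), since adjacent transpositions generate $S_{n+1}$, it suffices to verify invariance under the swap $(\varphi_i,\theta_i)\leftrightarrow(\varphi_{i+1},\theta_{i+1})$. All summands in \eqref{delignepairing} with index $k\notin\{i,i+1\}$ are manifestly unchanged, because they contain both $\theta_i$ and $\theta_{i+1}$ in the wedge in symmetric positions (either both as bare $\theta$, or both as $\theta+dd^c\varphi$), and wedge products of forms of even degree commute. Writing out the $i$-th and $(i+1)$-th summands before and after the swap and factoring out the common closed form $\alpha:=\theta_0\wedge\cdots\wedge\theta_{i-1}\wedge(\theta_{i+2}+dd^c\varphi_{i+2})\wedge\cdots\wedge(\theta_n+dd^c\varphi_n)$, their difference collapses to
\[
\int_X\varphi_i\,dd^c\varphi_{i+1}\wedge\alpha-\int_X\varphi_{i+1}\,dd^c\varphi_i\wedge\alpha,
\]
which vanishes by integration by parts, $\alpha$ being closed and $X$ compact without boundary.

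For (ii), set $\tilde\Theta_j:=\op{pr}_1^*\theta_j+dd^c_{(x,\tau)}\Phi_j$, a closed $(1,1)$-form on $X\times\mathbb{D}^*$, and let $\gamma_k:=\bigwedge_{j<k}\op{pr}_1^*\theta_j\wedge\bigwedge_{j>k}\tilde\Theta_j$. The telescoping identity in commutative variables yields
\[
\bigwedge_{j=0}^n\tilde\Theta_j=\bigwedge_{j=0}^n\op{pr}_1^*\theta_j+\sum_{k=0}^n dd^c\Phi_k\wedge\gamma_k,
\]
and the first term on the right vanishes since $\bigwedge_{j=0}^n\theta_j$ is an $(n+1,n+1)$-form on the $n$-complex-dimensional $X$. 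Each $\gamma_k$, being a wedge of closed real $(1,1)$-forms of pure type, is both $d$- and $d^c$-closed, so Leibniz gives $dd^c(\Phi_k\gamma_k)=dd^c\Phi_k\wedge\gamma_k$. Fiber integration over $X$ commutes with $dd^c$ (fibers being compact without boundary), and only the horizontal $(n,n)$-component of $\Phi_k\gamma_k$ survives that integration; one checks by bidegree that this horizontal piece restricts on each fiber $X\times\{\tau\}$ to $\varphi_k^t\,\theta_0\wedge\cdots\wedge\theta_{k-1}\wedge(\theta_{k+1}+dd^c\varphi_{k+1}^t)\wedge\cdots\wedge(\theta_n+dd^c\varphi_n^t)$, so the fiber integral equals the $k$-th summand of $\braket{\varphi_0^t,\dots,\varphi_n^t}_{(\theta_0,\dots,\theta_n)}$. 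Summing over $k$ gives
\[
\int_X\bigwedge_{j=0}^n\tilde\Theta_j=\sum_{k=0}^n dd^c_\tau\!\left(\int_X\Phi_k\gamma_k\right)=dd^c_\tau\braket{\varphi_0^t,\dots,\varphi_n^t}_{(\theta_0,\dots,\theta_n)}.
\]

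The main delicate point is the bidegree bookkeeping when passing between the fiber integral of $\Phi_k\gamma_k$ (which sees only the horizontal $(n,n)$-part of $\gamma_k$) and the fiber integral of $dd^c(\Phi_k\gamma_k)$ (into which the mixed components of $\tilde\Theta_j$ involving $d\tau$ and $d\bar\tau$ do contribute, and are precisely what produce the target $(1,1)$-form on $\mathbb{D}^*$). Once this accounting is recorded, the telescoping identity packages horizontal and vertical differentiation into the single global expression $\bigwedge_j\tilde\Theta_j$ on $X\times\mathbb{D}^*$, and no further computation is needed.
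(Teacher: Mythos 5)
Your argument is correct and follows essentially the same route as the source the paper cites (and as the paper itself indicates for the modified version, namely integration by parts against the closed form $\alpha$ for the symmetry, and the telescoping/$dd^c$-commutes-with-fiber-integration computation for (ii)). Note that this proposition is imported from \cite[Propositions 2.3 and 2.8]{sjostrom2018k} rather than proved in the paper, so there is nothing further to compare; your write-up would serve as a self-contained proof.
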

In the following we need a modified version of such pairing, {given by \cite{Dervan2023},} which can be understood as a generalization of $\braket{0,\cdots,0,\varphi_k,\cdots,\varphi_n}_{\braket{\theta_0,\cdots,\theta_n}}$.
\begin{definition}[\text{\citenum{Dervan2023}, Deﬁnition 3.1}]
	Let $k\in\mathbb{N}$, $\theta_k,\dots,\theta_n$ be closed $(1,1)$-forms, $B$ is a closed $(k,k)$-form on $X$.
	The $n+1-k$ ordered modified multivariate energy functional
	$\braket{\cdot,\dots,\cdot}_{(B)(\theta_k,\ldots,\theta_n)}$  
	on the space 
	$\mathcal{C}^\infty(X)
	\times\dots\times 
	\mathcal{C}^\infty(X)$  
	($n+1-k$ times) is defined by
		\begin{align}\label{modified}
			\langle \varphi_k,\dots,\varphi_n\rangle_{(B)(\theta_k,\dots,\theta_n)}
			&:= 
			\int_X \varphi_k\,B\wedge(\theta_{k+1} + dd^c\varphi_{k+1})\wedge\dots\wedge(\theta_n + dd^c\varphi_n)\notag\\ &+ \int_X \varphi_{k+1}\,B\wedge\theta_k\wedge(\theta_{k+2} + dd^c\varphi_{k+2})\wedge\dots\wedge(\theta_n + dd^c\varphi_n)
			\notag\\ &+ \int_X \varphi_{k+2}\,B\wedge\theta_k\wedge\theta_{k+1}(\theta_{k+3} + dd^c\varphi_{k+3})\wedge\dots\wedge(\theta_n + dd^c\varphi_n)
			\notag\\& + \dots
			+ \int_X \varphi_n\,B\wedge\theta_k\wedge\dots\wedge\theta_{n-1}.
		\end{align}
\end{definition}\noindent One can show that it has analogue properties as Proposition \ref{duichen} by using $dd^c$-lemma and integration by parts. In particular\begin{align}\label{my1}
	dd^c_\tau\braket{\varphi_k^t,\dots,\varphi_n^t}_{(B)(\theta_k,\dots,\theta_n)}=\int_X\operatorname{pr}_1^*B\wedge\bigwedge_{i=k}^n(\operatorname{pr}_1^*\theta_i+dd^c_{(x,\tau)}\Phi_i),
\end{align}{see \cite[Proposition 3.2]{Dervan2023}.} Note that if $B=\theta_0\wedge\cdots\wedge\theta_k$,  i.e. it can be decomposed into the product of $(1,1)$-forms, then we can get $\braket{\varphi_k,\dots,\varphi_n}_{(B)(\theta_k,\ldots,\theta_n)}=\braket{0,\cdots,0,\varphi_k,\cdots,\varphi_n}_{\braket{\theta_0,\cdots,\theta_n}}$ easily.\begin{remark}
	The modified multivariate energy functional  \eqref{my1} can also defined on $\operatorname{PSH}(X,\theta_k)\cap L^\infty_{\text{loc}}\times \cdots\times \operatorname{PSH}(X,\theta_k)\cap L^\infty_{\text{loc}}$ by the same formula. Note that if $\varphi_j\in \operatorname{PSH}(X,\theta_j)\cap L^\infty_{\text{loc}},k\leq j\leq n$, then we can interpret $\bigwedge_{j=k}^n(\theta_j+dd^c\varphi_j)$ as a closed positive current in the sense of Bedford-Taylor \cite{BedfordTaylor76,BedfordTaylor82}.
\end{remark}
\begin{remark}\label{remark9}
	The importance of the multivariate energy functional \eqref{delignepairing} is that it can reinterpret some functionals of the cscK equation theory, for example, the Monge-Amp\`{e}re functional and Aubin J-functional\begin{align*}
		\operatorname{E}(\varphi)&:=\frac{1}{n+1}\sum_{j=0}^n\int_X\varphi(\omega+dd^c\varphi)^j\wedge\omega^{n-j}=\frac{1}{n+1}\braket{\varphi,\dots,\varphi}_{(\omega,\dots,\omega)},\\
		\operatorname{J}(\varphi)&:=\int_{X}\varphi\omega^n-\operatorname{E}(\varphi)=\braket{\varphi,0,\dots,0}_{(\omega,\dots,\omega)}-\frac{1}{n+1}\braket{\varphi,\dots,\varphi}_{(\omega,\dots,\omega)}.
	\end{align*}In particular it can give an expansion for the Mabuchi K-energy $\operatorname{M}^{\operatorname{cscK}}$ by the Chen-Tian formula, i.e.\begin{align*}
		&\operatorname{M}^{\operatorname{cscK}}(\varphi) = \bar{S}\operatorname{E}(\varphi) - \operatorname{E}^{\mathrm{Ric}(\omega)}(\varphi) + \int_X \log \left( \frac{(\omega + dd^c \varphi)^n}{\omega^n} \right) (\omega + dd^c \varphi)^n\\ =&\  \frac{\bar{S}}{n+1}\braket{\varphi,\dots,\varphi}_{(\omega,\dots,\omega)}-\braket{0,\varphi,\dots,\varphi}_{(\operatorname{Ric}(\omega),\omega,\dots,\omega)}+\int_X \log \left( \frac{(\omega + dd^c \varphi)^n}{\omega^n} \right) (\omega + dd^c \varphi)^n,
	\end{align*}where $\bar{S}$ is the mean scalar curvature. Thus, for energy part $ \bar{S}\operatorname{E}(\varphi) - \operatorname{E}^{\mathrm{Ric}(\omega)}(\varphi)$ at least, one should expect a unified approach to computing the slope of the analytic Deligne pairing, treating the energy part's slope as a special case. This expectation is realised by \cite[Theorem 4.9]{sjostrom2018k}.
	
	%The entropy part $$\int_X \log \left( \frac{(\omega + dd^c \varphi)^n}{\omega^n} \right) (\omega + dd^c \varphi)^n$$ here can be also interpreted into the form of Deligne pairing (up to a bounded error). By using  that describes the asymptotic slope $$\lim_{t\to\infty}\frac{\braket{\varphi_0^t,\dots,\varphi_n^t}_{\braket{\theta_0,\dots,\theta_n}}}{t},$$one can provide an alternative proof of the results presented in \cite{boucksom2019}.
\end{remark}

\boldmath
\subsection{Algebraic test configurations, cohomological test configurations and smooth $\mathcal{H}_{\omega}$-rays compatible with them}
\unboldmath
\label{subsec:3}
As with Remark \ref{remark9}, we want to relate the $\alpha$-K-energy in Theorem \ref{alphaKenergy} to the modified multivariate energy functional (\ref{modified}), then to apply \cite[Theorem 4.9]{sjostrom2018k} to it. To explain this process, let us recall the definition of test configuration, which is originally given by Donaldson \cite{donaldson2002scalar}. We will follow \cite{boucksom2019} for the algebraic case and \cite{sjostrom2018k} for a generalization to the cohomological case. For convenience we will use the compactified version.

\begin{definition}Let $(V,L)$ be a smooth projective complex variety coupled with a line bundle, an ample algebraic test configuration $(\mathcal{V},\mathcal{L})$ for $(V,L)$ consists of the following data:\\
	(i) a flat, projective morphism of normal schemes $\pi: \mathcal{V} \to \mathbb{P}^1$ which is $\mathbb{C}^*$-equivariant;\\
	(ii) a $\mathbb{C}^*$-equivariant relatively ample $\mathbb{Q}$-line bundle $\mathcal{L}$;\\
	(iii) $(\mathcal{V}_t,\mathcal{L}_t) \simeq (V,L)$ for $t\neq 0$.
\end{definition}

\begin{definition}[\text{\citenum{sjostrom2018k}, Definition 3.2, 3.3 \& 3.4}]
	Let $X$ be a compact K\"{a}hler manifold and $\alpha\in H^{1,1}(X,\mathbb{R})$ be a cohomology class on $X$. A cohomological test configuration $(\mathcal{X},\mathcal{A})$ of $(X,\alpha)$ consists of:\\
	(i) a normal compact K\"{a}hler complex space $\mathcal{X}$ with a flat morphism $\pi:\mathcal{X}\to\mathbb{P}^1$ which is $\mathbb{C}^*$-equivariant;\\
	(ii) a $\mathbb{C}^*$-equivariant isomorphism $\mathcal{X}\backslash\mathcal{X}_0\simeq X\times(\mathbb{P}^1\backslash\{0\})$;\\
	(iii) a $\mathbb{C}^*$-invariant Bott-Chern cohomology class $\mathcal{A}\in H^{1,1}_{\operatorname{Bott-Chern}}(\mathcal X,\mathbb R)$ such that the pullback of $\mathcal{A}$ under the isomorphism $\mathcal{X}\backslash\mathcal{X}_0\simeq X\times(\mathbb{P}^1\backslash\{0\})$ is $\operatorname{pr}_1^*\alpha$.
	
	We say that a cohomological test configuration $(\mathcal{X}, \mathcal{A})$ for $(X, \alpha)$ is smooth
	if the total space $\mathcal{X}$ is smooth. In case $\alpha \in H^{1,1}(X,\mathbb{R})$ is Kähler,
	we say that $(\mathcal{X}, \mathcal{A})$ is relatively Kähler if the cohomology class 
	$\mathcal{A}$ is relatively Kähler, i.e.\ there is a Kähler form $\beta$ on $\mathbb{P}^1$ such that $\mathcal{A} + \pi^{*}\beta$  is Kähler on $\mathcal{X}$.
\end{definition}
A normal algebraic test configuration $(\mathcal{X},\mathcal{L})$ canonically induces a cohomological one $(\mathcal{X},c_1(\mathcal{L}))$.  In the present paper, we will apply the conclusion of the cohomological test configuration to the algebraic case to get Theorem \ref{my}.

Consider now a smooth algebraic test configuration $(\mathcal{X},\mathcal{L})$ for a polarized complex manifold $(X,L)$, which dominates $X\times \mathbb{P}^1$ with birational morphism $\mu$. By the normality of $\mathcal{X}$, we know that $\mathcal{L}=\mu^*p_1^*L+D$ for some $\mathbb{Q}$-Cartier divisor $D$ supported on $\mathcal{X}_0$. Suppose now that there is a smooth  metric $\Phi$ on $\mathcal{L}$ near the centre fiber. At the expense of some information outside $\mathcal{X}_0$, we can extend this metric to the entire space.  Then, we have the following equality of smooth Chern forms\begin{align*}
	c_1(\mathcal{L},\Phi)=\mu^*p_1^*c_1(L,\phi_1)+c_1(\mathcal{O}(D),h),
\end{align*}where $h$ is a $\mathbb{S}^1$-invariant metric  on the line bundle $\mathcal{O}(D)$ and $\phi_1=\Phi|_{\mathcal{X}_1}$. By Poincar\'{e}-Lelong equation, $c_1(\mathcal{O}(D),h)=\delta_D-dd^c\psi_D$ for some $\mathbb{S}^1$-invariant singular function $\psi_D\in\mathcal{C}^\infty(\mathcal{X}\backslash\mathcal{X}_0,\mathbb{R})$. Thus we have\begin{align*}
	c_1(\mathcal{L},\Phi)=\mu^*p_1^*c_1(L,\phi_1)-dd^c\psi_D
\end{align*}as current and therefore as smooth form on $\mathcal{X}\backslash\mathcal{X}_0$.

This means $\sqrt{-1}dd^c(\Phi_D|_{\mathcal{X}_\tau})=\sqrt{-1}dd^c\phi_1-dd^c(\psi_D|_{\mathcal{X}_\tau})$. Especially when the closed real $(1,1)$-forms $\omega_\tau=\sqrt{-1}dd^c(\Phi_D|_{\mathcal{X}_\tau})$ and $\omega_1=\sqrt{-1}dd^c\phi_1$ are positive, this gives a smooth ray in $\mathcal{H}_{\omega_1}$. When we further require $dd^c\Phi$ to be positive, the ray $-\psi_D$ becomes a subgeodesic ray. \cite{sjostrom2018k} extends such a relation between test configurations and $\mathcal{H}_\omega$-rays to the cohomological case, namely
\begin{definition}[\text{\citenum{sjostrom2018k}, Section 4.1, 4.2 and Lemma 4.10}]\label{compatible}
	Let $(\mathcal{X}, \mathcal{A})$ be a cohomological test configuration of $(X,\alpha)$ which is smooth and dominates $X\times \mathbb{P}^1$ with bimeromorphic morphism $\mu$, and $(\varphi_t)_{t\geq 0}$ be a smooth ray of smooth functions on $X$. Choose a smooth $S^1$-invariant $(1,1)$-form $\Theta$ representing $\mathcal{A}$, a smooth $(1,1)$-form $\theta$ representing $\alpha$ and an $S^1$-invariant smooth function $\Phi:X\times (\mathbb{P}\backslash \{0\})\to\mathbb{R}$ such that $\Phi(x,e^{-t+\sqrt{-1}s})=\varphi_t(x)$. We say that $(\varphi_t)_{t\geq 0}$ is compatible with $(\mathcal{X},\mathcal{A})$ if there exists a smooth $S^1$-invariant function $\Psi$ on $\mathcal{X}$ such that the current $\Theta+dd^c\Psi$ equals $\mu^*\operatorname{pr}_1^*\theta+\mu^*dd^c\Phi$ when restricted to $\mathcal{X}\backslash\mathcal{X}_0$.
\end{definition}
{It is clear that the algebraic test configuration and its induced $\mathcal{H}_\omega$-ray meet this definition.} Note that the definition is independent of the choice of 
$\Phi$, $\Theta$, and $\theta$, since replacing them by 
$\Phi'$, $\Theta + dd^c h$, and $\theta + dd^c g$ respectively (with $\Phi',h$ being $S^1$-invariant) 
changes $\Psi$ only by the addition of a smooth function of the form 
$\Phi' - \Phi + g \circ \operatorname{pr}_1 \circ \mu - h$.
\begin{definition}[\text{\citenum{sjostrom2018k}, Deﬁnition 2.5}]\label{weakcompatible}
	A smooth ray $(\varphi_t)_{t\geq 0}$ of functions in $\mathcal{H}_\omega$ is called a subgeodesic $\mathcal{H}_\omega$-ray if $\Phi$ is $\operatorname{pr}_1^*\omega$-psh on $X\times\mathbb{D}^*$, where $\Phi(x,e^{-t+\sqrt{-1}s})=\varphi_t(x)$.
\end{definition}
\begin{lemma}[\text{\citenum{sjostrom2018k}, Lemma 4.4}]
	If $(\mathcal{X},\mathcal{A})$ is a smooth, relatively Kähler cohomological test configuration of $(X,\omega)$, then a subgeodesic $\mathcal{H}_\omega$-ray that is $\mathcal{C}^\infty$-compatible with $(\mathcal{X},\mathcal{A})$ always exists.
\end{lemma}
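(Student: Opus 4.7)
The plan is to exploit the relatively Kähler hypothesis to build $\Psi$ and $\Phi$ explicitly, forcing $\operatorname{pr}_1^{*}\omega+dd^{c}\Phi$ to coincide with an ambient Kähler form on $\mathcal{X}$ over a neighbourhood of the central fibre $\mathcal{X}_0$. First, I would use the relatively Kähler assumption to choose a Kähler form $\beta$ on $\mathbb{P}^{1}$ and a smooth $(1,1)$-form $\Theta$ representing $\mathcal{A}$ such that $\Omega:=\Theta+\pi^{*}\beta$ is Kähler on $\mathcal{X}$; averaging over the $S^{1}\subset\mathbb{C}^{*}$ action allows us to take $\Theta$, $\beta$ and $\Omega$ all $S^{1}$-invariant. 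Then I would pick $r>1$, a smooth $S^{1}$-invariant potential $g$ of $\beta$ on the Stein disc $\mathbb{D}_{r}\subset\mathbb{P}^{1}$ centred at $0$, and a smooth $S^{1}$-invariant cutoff $\chi$ on $\mathbb{P}^{1}$ with $\chi\equiv 1$ on $\overline{\mathbb{D}}$ and $\operatorname{supp}\chi\Subset\mathbb{D}_{r}$.

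Set $\Psi:=\pi^{*}(\chi g)$, a smooth $S^{1}$-invariant function on $\mathcal{X}$. Since $\chi\equiv 1$ on $\overline{\mathbb{D}}$, one has $dd^{c}\Psi=\pi^{*}\beta$ on $\pi^{-1}(\overline{\mathbb{D}})$, and hence
\[
\Theta+dd^{c}\Psi=\Omega \quad \text{on } \pi^{-1}(\overline{\mathbb{D}}),
\]
which is Kähler. For $\Phi$, the defining property of cohomological test configuration says that the Bott-Chern class $\mathcal{A}$ restricts to $\operatorname{pr}_{1}^{*}\alpha$ under the identification $U:=\mathcal{X}\setminus\mathcal{X}_{0}\simeq X\times(\mathbb{P}^{1}\setminus\{0\})$, yielding a smooth $S^{1}$-invariant function $\Phi_{0}$ on $X\times(\mathbb{P}^{1}\setminus\{0\})$ with $\Theta|_{U}=\operatorname{pr}_{1}^{*}\omega+dd^{c}\Phi_{0}$ (the $S^{1}$-invariance comes from averaging over $S^1$). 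I would then take $\Phi:=\Phi_{0}+\Psi|_{U}$, so that
\[
\operatorname{pr}_{1}^{*}\omega+dd^{c}\Phi=\Theta|_{U}+dd^{c}\Psi|_{U}=(\Theta+dd^{c}\Psi)|_{U},
\]
which, using that $\mu|_{U}$ is the structural isomorphism, is precisely the compatibility identity $\Theta+dd^{c}\Psi=\mu^{*}\operatorname{pr}_{1}^{*}\omega+\mu^{*}dd^{c}\Phi$ on $U$. On $X\times\mathbb{D}^{*}\subset\pi^{-1}(\overline{\mathbb{D}})\cap U$, the right-hand side agrees with $\Omega|_{X\times\mathbb{D}^{*}}\geq 0$, so $\Phi$ is $\operatorname{pr}_{1}^{*}\omega$-psh there and the associated ray $\varphi_{t}(x):=\Phi(x,e^{-t+\sqrt{-1}s})$ is a subgeodesic.

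The hardest point is that $\beta$ is cohomologically nontrivial on $\mathbb{P}^{1}$, so no global smooth potential of $\beta$ exists and $\pi^{*}g$ fails to be smooth across $\mathcal{X}_0$; the cutoff $\chi$ is introduced precisely to absorb this, at the price that $\Theta+dd^{c}\Psi$ matches $\Omega$ only over $\pi^{-1}(\overline{\mathbb{D}})$. This is adequate because the ray lives in exactly this region ($|\tau|\leq 1$), so subgeodesicity is recovered there. The remaining technical work---smoothness of $\pi^{*}(\chi g)$ across $\operatorname{supp}\chi$ and the simultaneous $S^{1}$-invariance of $\Omega$, $g$, $\chi$ and $\Phi_{0}$---is routine cutoff and averaging.
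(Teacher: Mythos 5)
The paper does not prove this lemma itself; it is quoted directly from \cite[Lemma 4.4]{sjostrom2018k}, so there is no internal proof to compare against. Your construction is correct and is essentially the standard argument from that reference: take an $S^1$-invariant K\"ahler representative $\Omega$ of $\mathcal{A}+\pi^*[\beta]$, kill $\pi^*\beta$ near the central fibre by a cut-off local potential to get $\Psi$, and read off $\Phi$ on $\mathcal{X}\setminus\mathcal{X}_0\simeq X\times(\mathbb{P}^1\setminus\{0\})$ from the Bott--Chern condition $\mathcal{A}|_{\mathcal{X}\setminus\mathcal{X}_0}=\operatorname{pr}_1^*[\omega]$; positivity of $\Omega$ over the (open) unit disc then gives subgeodesicity exactly where it is required.
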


\subsection{Asymptotic slope of modified multivariate energy functionals}
\label{subsec:4}
Now we introduce a general version of \cite[Theorem 4.9]{sjostrom2018k}, which has been stated in \cite{Dervan2023}, and we apply it to the modified multivariate energy functional (\ref{modified}).

As in \cite[Section 3.1]{sjostrom2018k}, we say that the test configuration $\mathcal{X}$ dominates another $\mathcal{X}'$, if the meromorphic map between test configurations $\mathcal{X}\dashrightarrow\mathcal{X}'$ (induced by $\mathcal{X}\backslash\mathcal{X}_0\simeq X\times(\mathbb{P}^1\backslash\{0\})\simeq\mathcal{X}'\backslash\mathcal{X}'_0$) is a $\mathbb{C}^*$-equivariant morphism (see also \cite[Section 2]{boucksom2017uniform}). For any finite number of test configurations $\mathcal{X}_1,...,\mathcal{X}_k$, by equivariant resolution of singularities, there exists a smooth test configuration $\mathcal{X}$ dominates all $\mathcal{X}_i$ and $X\times\mathbb{P}^1$ (c.f.  \cite[Lemma 4.1.1]{dyrefelt2017k}). Thus we can define the intersection number\begin{align*}
	(\mathcal{A}_0\cdot\dots\cdot\mathcal{A}_n),
\end{align*}for any test configurations $(\mathcal{X}_i,\mathcal{A}_i),i=0,...,n$, by pulling back the respective cohomology classes $\mathcal{A}_i$ to $\mathcal{X}$ that dominates all $\mathcal{X}_i$. By the projection formula we know such intersection number is independent of the choice of common resolution $\mathcal{X}$. The similar holds true for the algebraic case, see \cite[Section 6.6]{boucksom2017uniform}.
\begin{lemma}[\text{\citenum{Dervan2023}, Lemma 3.2}]
	Let $X$ be a compact K\"{a}hler manifold of dimension $n$, {$0\leq k<n$ an integer,} $B$ a closed $(k,k)$-form and $\theta_i,i=k,...,n$ closed $(1,1)$-forms on $X$, and let $[\theta_i]$ be the cohomology class of $\theta_i$ for $i=k,...,n$. Consider smooth cohomological test conﬁgurations $(\mathcal{X}_i,\mathcal{A}_i)$ for $(X,[\theta_i])$ dominating $X\times\mathbb{P}^1$. For each collection of smooth rays $(\varphi_i^t)_{t\geq 0}$ of smooth functions that $\mathcal{C}^\infty$-compatible with $(\mathcal{X}_i,\mathcal{A}_i),i=k,...,n$, we have\begin{align*}
		\lim_{t\to\infty}\frac{\braket{\varphi_k^t,\dots,\varphi_n^t}_{(B)(\theta_k,\ldots,\theta_n)}}{t}=(\mathcal{B}\cdot \mathcal{A}_k\cdot\dots\cdot \mathcal{A}_n),
	\end{align*}by pulling back the cohomology classes $\mathcal{B}=[\operatorname{pr}_1^*B]$ (on trivial test configuration $X\times\mathbb{P}^1$) and $\mathcal{A}_i$ to smooth test configuration $\mathcal{X}$ that dominates all $\mathcal{X}_i$ and $X\times\mathbb{P}^1$.
\end{lemma}
\begin{proof}
	After pulling back we can assume all $\mathcal{X}_i$ are equal to $\mathcal{X}$, with $\mu:\mathcal{X}\to X\times \mathbb{P}^1$ being the dominating map. Let $\Phi_i:X\times(\mathbb{P}\backslash\{0\})\to \mathbb{R}$ be a smooth function such that $\Phi_i(x,e^{-t+\sqrt{-1}s})=\varphi_i^t(x)$ and choose a smooth $S^1$-invariant $(1,1)$-form $\Theta_i$ representing $\mathcal{A}_i$. By Definition \ref{compatible}, $\varphi_i^t$ compatible with $(\mathcal{X},\mathcal{A}_i)$ means that there exists a smooth $S^1$-invariant function $\Psi_i$ on $\mathcal{X}$ such that the current $\Theta_i+dd^c\Psi_i$ equals $\mu^*\operatorname{pr}_1^*\theta_i+\mu^*dd^c\Phi_i$ when restricted to $\mathcal{X}\backslash \mathcal{X}_0$. Set $u(\tau):=\braket{\varphi_k^t,\dots,\varphi_n^t}_{(B)(\theta_k,\ldots,\theta_n)}$, and then we have\begin{small}
		\begin{align*}
			&\frac{d}{dt}\bigg|_{t=-\log \varepsilon}u(\tau)=\int_{\mathbb{P}^1\backslash\varepsilon\mathbb{D}}dd^c_\tau\braket{\varphi_k^t,\dots,\varphi_n^t}_{(B)(\theta_k,\ldots,\theta_n)}\\ \overset{(\ref{my1})}{=}\ &\int_{X\times (\mathbb{P}^1\backslash\varepsilon\mathbb{D})}\operatorname{pr}_1^*B\wedge\bigwedge_{i=k}^n(\operatorname{pr}_1^*\theta_i+dd^c_{(x,\tau)}\Phi_i)\\=\ & \int_{\pi^{-1}(\mathbb{P}\backslash\varepsilon\mathbb{D})}\mu^*\operatorname{pr}_1^*B\wedge\bigwedge_{i=k}^n(\mu^*\operatorname{pr}_1^*\theta_i+\mu^*dd^c_{(x,\tau)}\Phi_i)\\= \  &\int_{\pi^{-1}(\mathbb{P}\backslash\varepsilon\mathbb{D})}\mu^*\operatorname{pr}_1^*B\wedge\bigwedge_{i=k}^n(\Theta_i+dd^c_{(x,\tau)}\Psi_i)\to (\mathcal{B}\cdot \mathcal{A}_k\cdot\dots\cdot \mathcal{A}_n),\quad \varepsilon\to 0, 
		\end{align*}
	\end{small}where $\tau=e^{-t+\sqrt{-1}s}\text{ and }\pi:\mathcal{X}\to \mathbb{P}^1$.
	
	Note that \begin{align*}
		u(\tau)&=\braket{\varphi_k^t,\dots,\varphi_n^t}_{(B)(\theta_k,\ldots,\theta_n)}=\braket{\varphi_k^t,\dots,\varphi_n^t}_{(M-M')(\theta_k,\ldots,\theta_n)}\\&=\braket{\varphi_k^t,\dots,\varphi_n^t}_{(M)(\theta_k,\ldots,\theta_n)}-\braket{\varphi_k^t,\dots,\varphi_n^t}_{(M')(\theta_k,\ldots,\theta_n)}
	\end{align*}for some positive $(k,k)$-forms $M,M'$. Continue this procedure,\begin{align*}
		&\braket{\varphi_k^t,\dots,\varphi_n^t}_{(M)(\theta_k,\ldots,\theta_n)}=\braket{\varphi_k^t,\dots,\varphi_n^t}_{(M)(\omega_k-\omega_k',\theta_{k+1},\ldots,\theta_n)}\\=&\ \braket{\varphi_k^t,\dots,\varphi_n^t}_{(M)(\omega_k,\theta_{k+1},\dots,\theta_n)}-\braket{0,\varphi_{k+1}^t,\dots,\varphi_n^t}_{(M)(\omega_k,\theta_{k+1},\dots,\theta_n)}
	\end{align*}for some positive $(1,1)$-forms $\omega_k,\omega_k'$. Thus by the symmetry in Proposition \ref{duichen}, $u(\tau)$ can be decomposed into a diﬀerence of convex functions, so\begin{align*}
		\lim_{t\to \infty}\frac{u(\tau)}{t}=\lim_{\varepsilon\to 0}\frac{d}{dt}\bigg|_{t=-\log \varepsilon}u(\tau)=(\mathcal{B}\cdot \mathcal{A}_k\cdot\dots\cdot \mathcal{A}_n),
	\end{align*}and the theorem follows. 
\end{proof}

\boldmath
\section{Expanding the $\alpha$-K-energy}
\unboldmath
\label{sec:4}

Let $E$ be an irreducible complex vector bundle of rank $r$ over a compact K\"{a}hler manifold $(X,\omega)$, consider the holomorphic frame bundle $P=F(E)$ of $E$, which is a principal $\operatorname{GL}_r(\mathbb{C})$-bundle. Just like section \ref{sec:2}, we fix inner product on $\mathfrak{u}(r)$ by $-\operatorname{tr}(\cdot\circ\cdot)$, which extends to a symmetric $\mathbb{C}$-bilinear product on $\mathfrak{u}(r)^c=\mathfrak{gl}(r,\mathbb{C})$.

At this moment, the space $\mathcal{R}$ of all $U(r)$-reduction of $P=F(E)$ is just $\operatorname{Herm}^+(E)$, the space of all Hermitian metrics on $E$. For $z$ in \eqref{KYM}, after a simple computation by \cite[(1.18)]{alvarez2013coupled}, we have $z=-2\pi\sqrt{-1}\frac{\mu(E)}{\operatorname{vol}_\omega}\operatorname{id}_E$ (under isomorphism of $\operatorname{End}E\simeq \operatorname{ad}P$), and thus\begin{align*}
	\Lambda_{\omega}(F_{H}\wedge z)\frac{\omega^n}{n!}=-\mu'(E)\Lambda_\omega(\operatorname{tr}(R_{H}))\frac{\omega^n}{n!}=-\mu'(E)\operatorname{tr}(R_{H})\wedge\frac{\omega^{n-1}}{(n-1)!},
\end{align*}where $\mu'(E):=-2\pi\sqrt{-1}\frac{\mu(E)}{\operatorname{vol}_\omega}$, $R_{H}\in\Omega^2(\operatorname{End}E)$ is the Chern curvature form of hermitian metric $H$ on the vector bundle $E$. Note that the irreducibility of $E$ is essential here; otherwise, $z$ could be written as a sum of scalar multiples of the identity on holomorphic subbundles, and consequently, $z \wedge F_H$ would also decompose as a sum.

By Corollary \ref{independent}, we can compute the formula $$4\operatorname{M}(b_t,b_0)+4Q_2(b_t,b_0)-Q_1(b_t,b_1)$$ by choosing a special line $\gamma_s^{(t)}$\begin{align}\label{gamma}
	b_s^{(t)}:=(s\varphi_t,H_0),s\in [0,1]\  \text{connected behind}\ b_s^{[t]}:=(\varphi_t,H_s),s\in [0,t],
\end{align}where $\varphi_t$ is smooth function that satisfies $\omega_t-\omega_0=dd^c\varphi_t$. As in \cite[Section 3]{chen2000lower}, we have\begin{align*}%$H_s^{(t)}=H_0\exp(sa_t),a_t=\log(H_0^{-1}H_t)\in \operatorname{End}(E)$, 
	&4\operatorname{M}(b_t,b_0)+4Q_2(b_t,b_0)-Q_1(b_t,b_1)=4\operatorname{M}'(b_t,b_0)+4Q_2'(b_t,b_0)-Q_1'(b_t,b_0),\\
	&\begin{aligned}
		&Q_1'(b_t,b_0)=\frac{1}{2\pi}\int_{0}^{1}\int_X(s\varphi_t)'|_{s}\left(\Lambda_{\omega_s^{(t)}}^2\left(F_{H_0}\wedge F_{H_0}\right)-2\pi\overline{C}_1\right)\frac{\left(\omega_s^{(t)}\right)^n}{n!}\wedge ds\\&=-\frac{1}{2\pi}\int_{0}^{1}\int_X\varphi_t\operatorname{tr}(R_{0}\wedge R_{0})\wedge\frac{\left(\omega_s^{(t)}\right)^{n-2}}{(n-2)!}\wedge ds-\overline{C}_1\int_{0}^{1}\int_X\varphi_t\frac{\left(\omega_s^{(t)}\right)^n}{n!}\wedge ds,
	\end{aligned}\\
	&\begin{aligned}
		&Q_2'(b_t,b_0)=\frac{1}{2\pi}\int_{0}^{1}\int_X(s\varphi_t)'|_{s}\left(\Lambda_{\omega_s^{(t)}}F_{H_0}\wedge z-2\pi\overline{C}_2\right)\frac{\left(\omega_s^{(t)}\right)^n}{n!}\wedge ds\\&=-\frac{\mu'(E)}{2\pi}\int_{0}^{1}\int_X\varphi_t\operatorname{tr}\left(R_{H_0}\right)\wedge\frac{\left(\omega_s^{(t)}\right)^{n-1}}{(n-1)!}\wedge ds-\overline{C}_2\int_{0}^{1}\int_X\varphi_t\frac{\left(\omega_s^{(t)}\right)^n}{n!}\wedge ds,
	\end{aligned}\\
	&\begin{aligned}
		\operatorname{M}'(b_t,b_0)=\int_{0}^{t}\int_X\sqrt{-1}\dot{H}_s\wedge \left(\Lambda_{\omega_t}F_{H_s}-z\right)\frac{\left(\omega_t\right)^n}{n!}\wedge ds,
	\end{aligned}
\end{align*}here $\omega_s^{(t)}=\omega_0+sdd^c\varphi_t$.

Note that $Q_1,Q_2,\operatorname{M}$ may depend on the path but $Q_1',Q_2'$ not. In fact, $\operatorname{M}'$ does not depend on the path either, see Section \ref{subsec:6}.

\boldmath
\subsection{Multivariate functional for $Q_1'$ and $Q_2'$}
\unboldmath
\label{subsec:5}

Let's compute the common factor $\int_{0}^{1}\int_X\varphi_t\left(\omega_s^{(t)}\right)^n\wedge ds$ of $Q_1'$ and $Q_2'$ firstly, which is actually $\operatorname{E}(\varphi)$ by \cite{chen2000lower}, and thus equal to $\frac{1}{n+1}\braket{\varphi_t,...,\varphi_t}_{(\omega_0,...,\omega_0)}$. For the reader's convenience, we provide a proof here:
\begin{align*}
	&\int_{0}^{1}\int_X\varphi_t\left(\omega_s^{(t)}\right)^n\wedge ds\\=&\ \int_{0}^{1}\int_X\varphi_t\left(\omega_0+sdd^c\varphi_t\right)^n\wedge ds\\ =&\ \sum_{p=0}^{n}\int_{0}^{1}\int_X\varphi_ts^{n-p}C_n^p\omega_0^p\wedge(dd^c\varphi_t)^{n-p}\wedge ds\\ = &\ \sum_{p=0}^{n}\frac{1}{n-p+1}\int_X\varphi_tC_n^p\omega_0^p\wedge(dd^c\varphi_t)^{n-p}\\ = &\ \sum_{p=0}^{n}\frac{n!}{(n-p+1)!p!}\int_X\varphi_tC_n^p\omega_0^p\wedge(dd^c\varphi_t)^{n-p}\\= &\ \frac{1}{n+1}\sum_{p=0}^{n}C_{n+1}^p\int_X\varphi_t\omega_0^p\wedge(dd^c\varphi_t)^{n-p}
\end{align*}which equals to $\frac{1}{n+1}\braket{\varphi_t,...,\varphi_t}_{(\omega_0,...,\omega_0)}$ since\begin{align*}
	&\braket{\varphi_t,...,\varphi_t}_{(\omega_0,...,\omega_0)}=\sum_{q=0}^{n}\int_X\varphi_t\omega_0^q\wedge (\omega_0+dd^c\varphi_t)^{n-q}\\=&\ \sum_{q=0}^{n}\sum_{i=0}^{n-q}\int_X\varphi_t C_{n-q}^i\omega_0^q\wedge \omega_0^i\wedge(dd^c\varphi_t)^{n-q-i}\\ =& \ \sum_{p=0}^n\sum_{i+q=p}\int_X\varphi_t C_{n-q}^i\omega_0^p\wedge(dd^c\varphi_t)^{n-p}\\ = &\ \sum_{p=0}^n\sum_{i=0}^p\int_X\varphi_t C_{n-p+i}^i\omega_0^p\wedge(dd^c\varphi_t)^{n-p}\\=&\ \sum_{p=0}^n\sum_{i=0}^p\int_X\varphi_t \left(C_{n-p+i+1}^i-C_{n-p+i}^{i-1}\right)\omega_0^p\wedge(dd^c\varphi_t)^{n-p}\\ =&\ \sum_{p=0}^n\sum_{i=0}^p\int_X\varphi_tC_{n+1}^p\omega_0^p\wedge(dd^c\varphi_t)^{n-p}.
\end{align*}

For $Q_2'$, up to a scalar $-\frac{\mu'(E)}{2\pi(n-1)!}$ for simply, we can just use a computation like \cite{chen2000lower}\begin{align*}
	&\int_{0}^{1}\int_X\varphi_t\operatorname{tr}(R_0)\wedge\left(\omega_s^{(t)}\right)^{n-1}\wedge ds\\ =\ & \int_{0}^{1}\int_X\varphi_t\operatorname{tr}(R_0)\wedge\left(\omega_0+sdd^c\varphi_t\right)^{n-1}\wedge ds\\= \ &\sum_{p=0}^{n-1}\int_{0}^{1}\int_Xs^{n-1-p}\varphi_t\operatorname{tr}(R_0)\wedge C_{n-1}^p\omega_0^{p}\wedge (dd^c\varphi_t)^{n-1-p}\wedge ds\\= \ &\sum_{p=0}^{n-1}\int_X\frac{(n-1)!}{p!(n-p)!}\varphi_t\operatorname{tr}(R_0)\wedge\omega_0^{p}\wedge (dd^c\varphi_t)^{n-1-p}\\=\ &\frac{1}{n}\sum_{p=0}^{n-1}\int_XC_n^p\varphi_t\operatorname{tr}(R_0)\wedge\omega_0^{p}\wedge (dd^c\varphi_t)^{n-1-p}.
\end{align*}On the other hand, the Deligne pairing (\ref{delignepairing}) gives us
\begin{align*}
	&\braket{0,\varphi_t,\varphi_t,...,\varphi_t}_{(\operatorname{tr}R_0,\omega_0,...,\omega_0)}=\sum_{q=0}^{n-1}\int_X\varphi_t\operatorname{tr}(R_0)\wedge \omega_0^q\wedge (\omega_0+dd^c\varphi_t)^{n-1-q}\nonumber\\=\ & \sum_{q=0}^{n-1}\sum_{i=0}^{n-1-q}\int_X\varphi_tC_{n-1-q}^i\operatorname{tr}(R_0)\wedge \omega_0^q\wedge \omega_0^i\wedge (dd^c\varphi_t)^{n-1-q-i}\nonumber\\ =\ &\sum_{p=0}^{n-1}\sum_{i+q=p}\int_X\varphi_tC_{n-1-q}^i\operatorname{tr}(R_0)\wedge \omega_0^p\wedge (dd^c\varphi_t)^{n-1-p}\nonumber\\=\ &\sum_{p=0}^{n-1}\sum_{i=0}^{p}\int_X\varphi_tC_{n-1-p+i}^i\operatorname{tr}(R_0)\wedge \omega_0^p\wedge (dd^c\varphi_t)^{n-1-p}\nonumber\\=\ &\sum_{p=0}^{n-1}\sum_{i=0}^{p}\int_X\varphi_tC_{n-1-p+i}^i\operatorname{tr}(R_0)\wedge \omega_0^p\wedge (dd^c\varphi_t)^{n-1-p}\nonumber\\ =\ &\sum_{p=0}^{n-1}\sum_{i=0}^{p}\int_X\varphi_t\left(C_{n-1-p+i+1}^i-C_{n-1-p+i}^{i-1}\right)\operatorname{tr}(R_0)\wedge \omega_0^p\wedge (dd^c\varphi_t)^{n-1-p}\nonumber\\=\ &\sum_{p=0}^{n-1}\int_X\varphi_tC_{n}^p\operatorname{tr}(R_0)\wedge \omega_0^p\wedge (dd^c\varphi_t)^{n-1-p},
\end{align*}and thus we have the following identity\begin{align}\label{my2}
	\int_{0}^{1}\int_X\varphi_t\operatorname{tr}(R_0)\wedge\left(\omega_s^{(t)}\right)^{n-1}\wedge ds=\frac{1}{n}\braket{0,\varphi_t,\varphi_t,...,\varphi_t}_{(\operatorname{tr}R_0,\omega_0,...,\omega_0)}.
\end{align}
%		Another one is (up to a scalar $-\sqrt{-1}\mu(E)/\operatorname{vol}_\omega$)
%		\begin{align*}
	%			-\sqrt{-1}\int_{0}^{1}\int_Xs\varphi_tdd^c(\operatorname{tr}a_t)\wedge\frac{\left(\omega_s^{(t)}\right)^{n-1}}{(n-1)!}\wedge ds,
	%		\end{align*}which will be related to $\operatorname{M}'$ later.

Similarly for $Q_1'$ it suffices to compute (up to a scalar $-\frac{1}{2\pi(n-2)!}$)\begin{align*}
	&\int_{0}^{1}\int_X\varphi_t\operatorname{tr}(R_0\wedge R_0)\wedge\left(\omega_s^{(t)}\right)^{n-2}\wedge ds\\ =\ &\int_{0}^{1}\int_X\varphi_t\operatorname{tr}(R_0\wedge R_0)\wedge\left(\omega_0+sdd^c\varphi_t\right)^{n-2}\wedge ds\\=\ &\sum_{p=0}^{n-2}\int_{0}^{1}\int_Xs^{n-2-p}C_{n-2-p}^p\varphi_t\operatorname{tr}(R_0\wedge R_0)\wedge \omega_0^p\wedge (dd^c\varphi_t)^{n-2-p}\wedge ds\\=\ &\sum_{p=0}^{n-2}\int_X\frac{(n-2)!}{p!(n-1-p)!}\varphi_t\operatorname{tr}(R_0\wedge R_0)\wedge \omega_0^p\wedge (dd^c\varphi_t)^{n-2-p}\\=\ & \frac{1}{n-1}\sum_{p=0}^{n-2}\int_XC_{n-1}^p\varphi_t\operatorname{tr}(R_0\wedge R_0)\wedge \omega_0^p\wedge (dd^c\varphi_t)^{n-2-p}.
\end{align*}On the another hand, the $n-1$ ordered modified Deligne pairing (\ref{modified}) gives us
\begin{align*}
	&\braket{\varphi_t,\dots,\varphi_t}_{(\operatorname{tr}(R_0\wedge R_0))(\omega_0,\dots,\omega_0)}\\  = \ &\sum_{q=0}^{n-2}\int_X\varphi_t\operatorname{tr}(R_0\wedge R_0)\wedge\omega_0^q\wedge(\omega_0+dd^c\varphi_t)^{n-2-q}\nonumber\\=\ &\sum_{q=0}^{n-2}\sum_{i=0}^{n-2-q}\int_XC_{n-2-q}^i\varphi_t\operatorname{tr}(R_0\wedge R_0)\wedge\omega_0^{q+i}\wedge (dd^c\varphi_t)^{n-2-q-i}\nonumber\\= \ &\sum_{p=0}^{n-2}\sum_{i+q=p}\int_XC_{n-2-q}^i\varphi_t\operatorname{tr}(R_0\wedge R_0)\wedge\omega_0^{p}\wedge (dd^c\varphi_t)^{n-2-p}\nonumber\\= & \ \sum_{p=0}^{n-2}\sum_{i=0}^p\int_XC_{n-2-p+i}^i\varphi_t\operatorname{tr}(R_0\wedge R_0)\wedge\omega_0^{p}\wedge (dd^c\varphi_t)^{n-2-p}\nonumber\\ =  &\ \sum_{p=0}^{n-2}\sum_{i=0}^p\int_X(C_{n-2-p+i+1}^{i}-C_{n-2-p+i}^{i-1})\varphi_t\operatorname{tr}(R_0\wedge R_0)\wedge\omega_0^{p}\wedge (dd^c\varphi_t)^{n-2-p}\nonumber\\ = &\ \sum_{p=0}^{n-2}\int_XC_{n-1}^p\varphi_t\operatorname{tr}(R_0\wedge R_0)\wedge\omega_0^{p}\wedge (dd^c\varphi_t)^{n-2-p},
\end{align*}and thus we have\begin{align}\label{my3}
	\int_{0}^{1}\int_X\varphi_t\operatorname{tr}(R_0\wedge R_0)\wedge\left(\omega_s^{(t)}\right)^{n-2}\wedge ds=\frac{1}{n-1}\braket{\varphi_t,\dots,\varphi_t}_{(\operatorname{tr}(R_0\wedge R_0))(\omega_0,\dots,\omega_0)}.
\end{align}

\boldmath
\subsection{The functional $\operatorname{M}'$}
\unboldmath
\label{subsec:6}
For $\operatorname{M}'$ we need to unpack $\dot{H}_s$, which is described as a section of $\sqrt{-1}\operatorname{ad}P_{H_s}$ in \cite[(3.120)]{alvarez2013coupled}. Recall that $P_{H}$ is the $U(r)$-reduction of $P$ induced by the Hermitian metric $H$. 
A point of the adjoint bundle $\operatorname{ad} P_{H} = P_{H}\times_{U(r)}\mathfrak{u}(r)$ can be represented by an equivalence class 
\([e, Y(e)]\), 
where $e$ and $e'$ are $H$-orthonormal frames (written as row vectors) over the same point $x=\pi(e)=\pi(e')\in X$, $Y:P_{H}\to\mathfrak{u}(r)$ is a $\mathfrak{u}(r)$-valued function on $P_{H}$, 
and the equivalence relation is given by
\[
(e, Y(e)) \sim (e', Y(e')) 
\Leftrightarrow 
e' = eA,
Y(e') = \operatorname{Ad}_{A^{-1}} Y(e),\  \text{for some}\  A\in U(r).
\] Giving a section of $\operatorname{ad}P_{H_s}$ is equivalent to giving a endomorphism of $E$ whose matrix under any $H$-orthonormal basis is in $\mathfrak{u}(r)$. Actually there is an embedding\begin{align}\label{equivalentclass}
	\begin{aligned}
		\Gamma(\operatorname{ad}P_H)&\hookrightarrow \operatorname{End}E,\\
		[e,Y(e)]&\mapsto \varphi_Y: p=ev\mapsto eY(e)v,
	\end{aligned}
\end{align}where $v$ is a column vector. Note that $Y(eA)=A^{-1}Y(e)A$ and implies that $\varphi_Y(e'v')=e'Y(e')v'=eAA^{-1}Y(e)AA^{-1}v=eY(e)v=\varphi_Y(ev)$ if $ev=e'v'$.

Let $\operatorname{Herm}(E,H)$ be the space of all endomorphisms that can be represented by a positive definite Hermitian matrix under any $H$-orthonormal frame of $E$, and $\operatorname{Herm}^+(E)$ be the space of all Hermitian metrics, i.e. the subspace of $\operatorname{Hom}(E,\overline{E}^\vee)$ that contains all linear morphism $\varphi$ with matrix $(\braket{\varphi(e_i),\overline{e}_j})_{(i,j)}$ positive Hermitian. Then we have
\begin{proposition}
	For a fixed hermitian metric $H$, there are one-to-one correspondences\begin{align*}
		a\in\operatorname{Herm}(E,H)&\leftrightarrow h(a)\in \operatorname{Herm}^{+}(E) \\&\leftrightarrow s(a)\in \Gamma(\sqrt{-1}\operatorname{ad}P_H)\\&\leftrightarrow H(a)\in\Gamma(P/U(r)),
	\end{align*}where (i) $h(a)=H\circ\exp(a)$ is the combination of map $a\in \operatorname{End}(E)$ with $H\in \operatorname{Hom}(E,\overline{E}^\vee)$.\\
	(ii) Over a point $x\in X$, the value of the section $s(a)$ can be written as the class
	\[
	s(a)(x)=\left[e,-\tfrac{1}{2}a(e)\right],
	\]
	that is, the equivalence class determined by an $H$-orthonormal frame $e$ (viewed as a point of $P_H$) over $x$ together with the matrix $-\tfrac{1}{2}a(e)\in \sqrt{-1}\,\mathfrak{u}(r)$, where $a(e)$ denotes the matrix representing the endomorphism $a$ in the frame $e$.  
	Note that $\sqrt{-1}\,\mathfrak{u}(r)$ is precisely the space of Hermitian matrices.\\
	(iii) Over a point $x\in X$, the section $H(a)$ takes the form 
	\[
	H(a)(x) = \left[e \cdot \exp(-\tfrac{1}{2} a(e))\right],
	\] 
	i.e., the $U(r)$-equivalence class of point $e \cdot \exp(-\tfrac{1}{2} a(e))$ in $P$, for $H$-orthonormal frames $e$ over $x$. Recall that a point in the bundle $P/U(r)$ over $x \in X$ is a $U(r)$-equivalence class of frames of $E_x$, which can be regarded as an orthonormal basis for some Hermitian metric on $E_x$.
\end{proposition}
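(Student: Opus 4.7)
The plan is to take $a \in \op{Herm}(E,H)$ as the fundamental parameter and construct each of the other three objects directly from $a$; the task then reduces to verifying well-definedness (independence of the $H$-orthonormal frame $e$) and invertibility in each case. Throughout I will use that the exponential is a diffeomorphism from the vector space of $H$-self-adjoint endomorphisms of $E$ onto the open cone of positive-definite $H$-self-adjoint endomorphisms, with inverse the principal logarithm.

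For (i), the map $a \mapsto h(a) = H \circ \exp(a)$ lands in $\op{Herm}^+(E)$ because $\exp(a)$ is positive-definite and $H$-self-adjoint, and the inverse sends $H' \in \op{Herm}^+(E)$ to $\log(H^{-1} \circ H')$, which lies in $\op{Herm}(E,H)$ since $H^{-1} \circ H'$ is positive-definite and $H$-self-adjoint. For (ii), in any $H$-orthonormal frame $e$ the matrix $a(e)$ is Hermitian, so $-\tfrac{1}{2} a(e) \in \sqrt{-1}\,\mathfrak{u}(r)$; under a change of frame $e' = eA$, $A \in U(r)$, the relation $a(e') = A^{-1} a(e) A$ matches the equivalence $(e, Y) \sim (eA, \op{Ad}_{A^{-1}} Y)$ defining $\op{ad} P_H$, so $s(a)(x) := [e, -\tfrac{1}{2} a(e)]$ is a well-defined section of $\sqrt{-1} \op{ad} P_H$ and is clearly invertible by reading off the matrix in any representative. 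For (iii), the class $[e \cdot \exp(-\tfrac{1}{2} a(e))] \in P/U(r)$ is independent of the choice of $e$ by the computation $e' \exp(-\tfrac{1}{2} a(e')) = eA \cdot A^{-1} \exp(-\tfrac{1}{2} a(e)) A = (e \exp(-\tfrac{1}{2} a(e))) \cdot A$, whose $U(r)$-class coincides with that of $e \exp(-\tfrac{1}{2} a(e))$; the inverse uses the canonical bijection between $U(r)$-classes of frames of $E_x$ and Hermitian forms on $E_x$ (a class determines the unique metric under which it is orthonormal), which reduces (iii) to (i).

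The main point requiring care is the mutual compatibility of these four descriptions, specifically the check that the frame $e \cdot \exp(-\tfrac{1}{2} a(e))$ representing $H(a)$ is in fact $h(a)$-orthonormal — this closes the loop (i) $\leftrightarrow$ (iii) through the canonical identification of $P/U(r)$ with $\op{Herm}^+(E)$. This is a short computation: in the frame $e$ the matrix of $h(a)$ is $\exp(a(e))$, and $\exp(-\tfrac{1}{2} a(e))^* \exp(a(e)) \exp(-\tfrac{1}{2} a(e)) = I$ because $a(e)$ is Hermitian. I do not anticipate any serious obstacle; the proof is essentially book-keeping, but the setup must be made explicit because the subsequent analysis of $\op{M}'$ identifies $\dot H_s$ with a section of $\sqrt{-1} \op{ad} P_{H_s}$ precisely through this dictionary.
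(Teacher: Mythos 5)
Your proposal is correct and follows essentially the same route as the paper: the one substantive verification in both is that the frame $e\cdot\exp(-\tfrac{1}{2}a(e))$ is $h(a)$-orthonormal, which you carry out as the matrix identity $\exp(-\tfrac{1}{2}a(e))^{*}\exp(a(e))\exp(-\tfrac{1}{2}a(e))=I$ while the paper writes out the same computation entrywise. The only difference is presentational: the paper delegates the correspondences $a\leftrightarrow h(a)$ and $s(a)\leftrightarrow H(a)$ to citations (Kobayashi, Section 6.2, and \cite[(3.120)]{alvarez2013coupled}), whereas you prove them directly via the logarithm and the frame-change/equivariance checks.
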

\begin{proof}
	The first correspondence is given by \cite[Section 6.2]{kobayashi2014differential}. The third one is from \cite[(3.120)]{alvarez2013coupled}. We only need to check that the $U(r)$-equivalent class $\left[e\cdot\exp\left(-\frac{1}{2}a(e)\right)\right]$ is orthonormal under Hermitian metric $h(a)$ over $x$, which gives the correspondence between $h(a)$ and $H(a)$.
	
	Choose an local $H$-orthonormal frame $e=(e_1,...,e_r)$ of $E$, Hermitian matrix $A$ is the matrix of $a\in\operatorname{End}(E)$ under $e$, let $e'=(e_1,...,e_r)\exp\left(-\frac{1}{2}A\right)=\left(\sum_ke_k\exp\left(-\frac{1}{2}A\right)_{ki}\right)$, then\begin{align*}
		\braket{h(a)e_i',\overline{e_j'}}&=\sum_{k,l}\braket{h(a)e_k\exp\left(-\frac{1}{2}A\right)_{ki},\overline{e_l\exp\left(-\frac{1}{2}A\right)_{lj}}}\\&=\sum_{k,l}\braket{h\circ\exp(a)\left(e_k\exp\left(-\frac{1}{2}A\right)_{ki}\right),\overline{e_l\exp\left(-\frac{1}{2}A\right)_{lj}}}\\&=\sum_{k,l}\braket{h\circ\exp(a)(e_k)\exp\left(-\frac{1}{2}A\right)_{ki},\overline{e_l\exp\left(-\frac{1}{2}A\right)_{lj}}}\\&=\sum_{k,l,m}\braket{h(e_m)\exp(A)_{mk}\exp\left(-\frac{1}{2}A\right)_{ki},\overline{e_l\exp\left(-\frac{1}{2}A\right)_{lj}}}\\&=\sum_{m,j}\exp\left(\frac{1}{2}A\right)_{mk}\overline{\exp\left(-\frac{1}{2}A\right)_{mj}}\\&=\sum_{m,j}\exp\left(\frac{1}{2}A\right)_{mk}\exp\left(-\frac{1}{2}A\right)_{jm}=\delta_{jk},
	\end{align*}which gives the proof.
\end{proof}
\begin{corollary}
	Let $H_s=H_0\circ \exp(a_s)$ be a smooth curve in $\mathcal{R}=\operatorname{Herm}^+(E)$ with $a_s\in \operatorname{Herm}(E,H_0)$, $U$ an open subset of $X$, $e$ a local $H_0$-orthonormal frame of $E$ on $U$ and $A_s$ the matrix representing the endomorphism $a_s$ in the frame $e$. Then the derivative $\dot{H}_s\in \sqrt{-1}\operatorname{ad}P_{H_0}$ equals to $[e,-1/2A_s']$ (equivalence class defined by (\ref{equivalentclass})) on $U$.
\end{corollary}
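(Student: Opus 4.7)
The plan is to invoke the chain of correspondences established in the preceding proposition and then differentiate in the resulting vector-bundle picture. Specifically, the map $\Phi : a \mapsto h(a) = H_0 \circ \exp(a)$ is a bijection $\op{Herm}(E,H_0) \to \op{Herm}^+(E)$, so the curve $H_s = H_0 \circ \exp(a_s)$ is parameterized by $a_s \in \op{Herm}(E,H_0)$, which in turn is identified with the section $s(a_s) = [e, -\tfrac{1}{2}a_s(e)] \in \Gamma(\sqrt{-1}\op{ad}P_{H_0})$ via the second correspondence of the proposition. This identification is precisely what gives meaning to the assertion $\dot{H}_s \in \sqrt{-1}\op{ad}P_{H_0}$: the tangent vector is computed in the fixed vector bundle $\sqrt{-1}\op{ad}P_{H_0}$ rather than in a moving tangent space over $H_s$.

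Next I would restrict to the local $H_0$-orthonormal frame $e$ on $U$ and observe that, under this frame, $s(a_s)|_U$ is the class $\left[e, -\tfrac{1}{2}A_s\right]$. Since $e$ does not depend on $s$ and the map $Y \mapsto [e, Y]$ from $\mathfrak{gl}(r,\mathbb{C})$ to the fiber of $\op{ad}P \otimes \mathbb{C}$ over a point of $U$ is $\mathbb{R}$-linear, I can differentiate inside the bracket:
\[
\dot{H}_s\big|_U = \frac{d}{ds}\left[e, -\tfrac{1}{2}A_s\right] = \left[e, -\tfrac{1}{2}A'_s\right].
\]
Because each $A_s$ is Hermitian (as $a_s \in \op{Herm}(E,H_0)$), so is $A'_s$, hence $-\tfrac{1}{2}A'_s \in \sqrt{-1}\mathfrak{u}(r)$ and the resulting class genuinely lies in $\sqrt{-1}\op{ad}P_{H_0}$.

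The main step that is not purely mechanical is the conceptual one: unpacking that $\dot{H}_s$ in the statement refers to the derivative under the trivialized parameterization $\op{Herm}^+(E) \simeq \Gamma(\sqrt{-1}\op{ad}P_{H_0})$ supplied by the proposition, rather than to the tangent vector at $H_s$ in the intrinsic tangent bundle of $\op{Herm}^+(E)$. Once this is in hand, frame-independence of $\left[e, -\tfrac{1}{2}A'_s\right]$ follows from the transformation rule $Y(eB) = \op{Ad}_{B^{-1}}Y(e)$ applied to $Y(e) = -\tfrac{1}{2}A_s$ and its $s$-derivative, and the corollary is complete.
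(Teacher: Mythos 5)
Your proof is correct and follows exactly the route the paper intends: the corollary is stated without proof as an immediate consequence of the preceding proposition, obtained by transporting the curve $H_s$ through the linear identification $\op{Herm}^+(E)\simeq\Gamma(\sqrt{-1}\op{ad}P_{H_0})$, $h(a)\mapsto s(a)=[e,-\tfrac{1}{2}a(e)]$, and differentiating inside the bracket. Your explicit checks that $A_s'$ remains Hermitian and that $[e,-\tfrac{1}{2}A_s']$ is frame-independent via $Y(eB)=\op{Ad}_{B^{-1}}Y(e)$ are exactly the points that make the one-line differentiation legitimate.
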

The minus in  $-1/2A_s'$ balances the minus in the adjoint-invariant product $-\operatorname{tr}(\cdot\circ\cdot)$. Recall that in \cite[(6.3.27)]{kobayashi2014differential}, the Donaldson functional is defined by
$$\operatorname{M}^{\operatorname{Don}}(H_1,H_0):=\int_{0}^{1}\int_X\operatorname{tr}\left(h_s^{-1}\partial_s h_s\cdot \left(\sqrt{-1}\Lambda_\omega R_s-2\pi\frac{\mu(E)}{\operatorname{vol}_\omega}\operatorname{id}_E\right)\right)\frac{\omega^n}{n!},$$
which is independent to the choice of path $H_s$. Here $h_s$ denotes the matrix representation of $H_s$ and $R_s$ is the curvature form of the associated Chern connection, both expressed in a fixed local frame of $E$. Note that different choices of local frame lead to conjugate matrices, and the trace ensures that the integrand is independent of this choice.

The Donaldson functional here has the same sign with $\operatorname{M}'$ since
\begin{align*}
	&\int_{0}^{1}\int_X\sqrt{-1}\dot{H}_s\wedge (\Lambda_{\omega}F_{H_s}-z)\frac{\omega^n}{n!}\wedge ds\\ =\ &\int_{0}^{1}\int_X\sqrt{-1}\left(-1/2A_s'\right)\wedge \left(\Lambda_{\omega}F_{H_s}-\left(-2\pi\sqrt{-1}\frac{\mu(E)}{\operatorname{vol}_\omega}\operatorname{id}_E\right)\right)\frac{\omega^n}{n!}\wedge ds
	\\ =\ &  \int_{0}^{1}\int_X-\sqrt{-1}\operatorname{tr}\left[\left(-1/2h_s^{-1}\partial_sh_s\right)\wedge \left(\Lambda_{\omega}R_{s}-\left(-2\pi\sqrt{-1}\frac{\mu(E)}{\operatorname{vol}_\omega}\operatorname{id}_E\right)\right)\right]\frac{\omega^n}{n!}\wedge ds \\ =\ & 1/2\int_{0}^{1}\int_X\operatorname{tr}\left[\left(h_s^{-1}\partial_sh_s\right)\wedge \left(\sqrt{-1}\Lambda_{\omega}R_{s}-2\pi\frac{\mu(E)}{\operatorname{vol}_\omega}\operatorname{id}_E\right)\right]\frac{\omega^n}{n!}\wedge ds.
\end{align*}In particular we have the following identity.
\begin{corollary}\label{mycorollary1}
	$\operatorname{M}'(b_t,b_0)=1/2\operatorname{M}^{\operatorname{Don}}_{\omega_t}(H_t,H_0)$, here $\operatorname{M}^{\operatorname{Don}}_{\omega_t}$ is the Donaldson functional with respect to the K\"{a}hler form $\omega_t$. Consequently $\operatorname{M}'$ is independent of the choice of path.
\end{corollary}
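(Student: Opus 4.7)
The plan is to reduce the claimed identity to a pointwise matching of integrands in a local $H_0$-orthonormal frame, after which the second assertion (path independence) is an immediate consequence of the well-known path independence of the Donaldson functional.

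First, I would observe that along the segment $b_s^{[t]}=(\varphi_t,H_s)$ entering the definition of $\op{M}'(b_t,b_0)$, the K\"ahler form is frozen at $\omega_t$ and only the Hermitian metric varies. Writing $H_s=H_0\circ\exp(a_s)$ with $a_s\in\op{Herm}(E,H_0)$, the preceding corollary gives $\dot H_s=[e,-\tfrac12 A_s']$ in an $H_0$-orthonormal frame $e$, where $A_s$ denotes the matrix of $a_s$ in this frame. Using the $\mathbb{C}$-bilinear pairing $(\cdot,\cdot)=-\op{tr}(\cdot\circ\cdot)$ on $\mathfrak{gl}(r,\mathbb{C})$ that extends the adjoint-invariant inner product on $\mathfrak{u}(r)$, the wedge product $\sqrt{-1}\dot H_s\wedge(\Lambda_{\omega_t}F_{H_s}-z)$ is rewritten as a scalar $(1,1)$-form expressed as $-\sqrt{-1}\op{tr}\bigl[(-\tfrac12 A_s')\cdot(\Lambda_{\omega_t}R_s-z')\bigr]$, with $z'=-2n\pi\sqrt{-1}\tfrac{\mu(E)}{\op{vol}_\omega}\op{id}_E$ as identified at the beginning of the section.

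The key technical step is to replace $A_s'$ by $h_s^{-1}\partial_s h_s$ inside the trace. In the chosen $H_0$-orthonormal frame one has $h_0=I$ and $h_s$ is conjugate to $\exp(A_s)$ (via transposition/conjugation depending on index conventions), so $h_s^{-1}\partial_s h_s$ and $A_s'$ coincide up to a similarity transformation, and hence have the same trace when paired against a matrix-valued form. This identification is the one place where care with conventions is needed, but it reduces to a direct computation using $H_s(e_i)=H_0(\exp(a_s)e_i)$. Once this is in hand, substitution gives the displayed chain of equalities culminating in
\[
\op{M}'(b_t,b_0)=\tfrac12\int_0^1\!\!\int_X\op{tr}\bigl[h_s^{-1}\partial_s h_s\cdot(\sqrt{-1}\Lambda_{\omega_t}R_s-2n\pi\tfrac{\mu(E)}{\op{vol}_\omega}\op{id}_E)\bigr]\frac{\omega_t^n}{n!}\wedge ds,
\]
which is exactly $\tfrac12\op{M}^{\op{Don}}_{\omega_t}(H_t,H_0)$ in the convention of Kobayashi [6.3.27].

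Finally, the Donaldson functional $\op{M}^{\op{Don}}_{\omega_t}(H_t,H_0)$ depends only on the endpoints $H_0$ and $H_t$ (and on the fixed background form $\omega_t$), as recalled just above the statement. Therefore $\op{M}'(b_t,b_0)$ inherits this property, proving path independence. The main (and essentially only) obstacle is the bookkeeping in the second step: keeping track of the signs coming from $(\cdot,\cdot)=-\op{tr}$, the factor $-\tfrac12$ from the representation of $\dot H_s$, and the $\sqrt{-1}$ that converts $F_{H_s}$ to $R_s$ and $z$ to $z'$, so that they combine correctly into the single overall factor $\tfrac12$.
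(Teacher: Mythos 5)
Your proposal is correct and follows essentially the same route as the paper: you unpack $\dot H_s$ as $[e,-\tfrac12 A_s']$ via the preceding corollary, convert the adjoint-invariant pairing into $-\op{tr}$, identify $A_s'$ with $h_s^{-1}\partial_s h_s$ in an $H_0$-orthonormal frame, and collect the signs and factors of $\sqrt{-1}$ into the overall constant $\tfrac12$, after which path independence is inherited from the Donaldson functional as in Kobayashi. The only difference is cosmetic: you spell out the frame/conjugation bookkeeping that the paper dispatches with the remark that the trace is invariant under change of local frame.
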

\begin{remark}
	The $1/2$ here is also natural. Because in \cite{alvarez2013coupled}, if the reader tries to compute the Gateau derivative of $\mathcal{M}_I$ directly and verify its closedness, this $1/2$ will balance the $2$ in $dd^c = 2\sqrt{-1}\partial\bar\partial$.
\end{remark}

\section{Proof of the main results}
\label{sec:5}
\subsection{Proof of (\ref{mytheorem1}) and Theorem \ref{my}}
\label{subsec:7}
We are now ready to prove our first result (\ref{mytheorem1}). For the curve $\gamma_s$ in (\ref{gamma}), by (\ref{my2}), (\ref{my3}) and Corollary \ref{mycorollary1}, we have %We now compute the asymptotic slope of $\alpha$-K-energy (\ref{alphaKenergy})
\begin{align*}
	&\mathcal{M}_I(b_t,b_0)=2\alpha_1\operatorname{M}^{\operatorname{Don}}_{\omega_t}(h_t,h_0)+\alpha_0\operatorname{M}^{\operatorname{cscK}}(\omega_t,\omega_0)-\frac{\alpha_1}{2\pi}Q_1'+\frac{2\alpha_1}{\pi}Q_2'
	\\&=2\alpha_1\operatorname{M}^{\operatorname{Don}}_{\omega_t}(h_t,h_0)+\alpha_0\operatorname{M}^{\operatorname{cscK}}(\omega_t,\omega_0)-\frac{\alpha_1}{2\pi}Q_1'
	\\& \quad \quad +4\sqrt{-1}\alpha_1\frac{\mu(E)}{n!\operatorname{vol}_\omega}\braket{0,\varphi_t,\varphi_t,...,\varphi_t}_{(\operatorname{tr}R_0,\omega_0,...,\omega_0)}\\& \quad \quad- \frac{4\alpha_1\overline{C}_2}{(n+1)!}\braket{\varphi_t,...,\varphi_t}_{(\omega_0,...,\omega_0)}
	\\&=2\alpha_1\operatorname{M}^{\operatorname{Don}}_{\omega_t}(h_t,h_0)+\alpha_0\operatorname{M}^{\operatorname{cscK}}(\omega_t,\omega_0)\\& \quad \quad-4\pi\alpha_1\frac{1}{(n-1)!}\braket{\varphi_t,\dots,\varphi_t}_{(\operatorname{ch}_2(E,h_0))(\omega_0,\dots,\omega_0)}
	\\ & \quad +8\pi\alpha_1\frac{\mu(E)}{n!\operatorname{vol}_\omega}\braket{0,\varphi_t,\varphi_t,...,\varphi_t}_{(c_1(E,h_0),\omega_0,...,\omega_0)}\\& \quad \quad+ \frac{\alpha_1\overline{C}_1-4\alpha_1\overline{C}_2}{(n+1)!}\braket{\varphi_t,...,\varphi_t}_{(\omega_0,...,\omega_0)},
\end{align*}where $\overline{C}_1=\frac{4\pi\int_X\operatorname{ch}_2(E)\wedge\omega^{n-2}}{(n-2)!\operatorname{vol}_\omega},\overline{C}_2=2\pi\operatorname{rank}(E)\left(\frac{\mu(E)}{\operatorname{vol}_\omega}\right)^2$ are topological invariants.
\begin{proof}[\rm{\textbf{Proof of Theorem \ref{my}}}]
	By the discuss in section \ref{subsec:3}, such $(\mathcal{X},\mathcal{L})$ and $\Phi$ induces a subgeodesic $\varphi_t$ that is $\mathcal{C}^\infty$-compatible with $(\mathcal{X},c_1(\mathcal{L}))$. Thus the slope for the (modified) multivariate energy functional part is clear. The theorem then follows from \cite[Theorem A]{boucksom2019}.
\end{proof}
\begin{theorem}\label{mycor}
	Let $(X,\omega)$ be a compact K\"{a}hler manifold with $\omega$ located in the cohomology class $\Omega\in H^{1,1}(X,\mathbb{R})$, and $\pi:E\to X$ an irreducible holomoprhic vector bundle over $X$. Then for any Hermitian metric $h$ of $E$, any $(\mathcal{X}, \mathcal{A})$ a cohomological test configuration of $(X,\Omega)$ which is smooth and dominates $X\times \mathbb{P}^1$ with bimeromorphic morphism $\mu$, and any smooth subgeodesic ray $(\varphi_t)_{t\geq 0}$ compatible with $(\mathcal{X},\mathcal{A})$ (in the sense of Definition \ref{compatible}), we have\begin{align*}
		\lim_{t\to\infty}\frac{\mathcal{M}_I(b_t,b_0)}{t}&\leq\alpha_0\operatorname{DF}(\mathcal{X}, \mathcal{A})-4\pi\alpha_1\frac{1}{(n-1)!}(\mu^*\operatorname{pr}_1^*\operatorname{ch}_2(E)\cdot \mathcal{A}\cdot\dots\cdot \mathcal{A}) \\&\quad\quad+8\pi^2\alpha_1\frac{\mu(E)}{n!\operatorname{vol}_\Omega}(\mu^*\operatorname{pr}_1^*c_1(E)\cdot \mathcal{A}\cdot\dots\cdot \mathcal{A})
		)\\& \quad\quad+\frac{\alpha_1\overline{C}_1-4\alpha_1\overline{C}_2}{(n+1)!}(\mathcal{A}\cdot\dots\cdot \mathcal{A}),
	\end{align*}where $b_t=(\omega+dd^c\varphi_t,h)$, $\operatorname{DF}(\mathcal{X},\mathcal{A})$ is the Donaldson–Futaki invariant of the cohomological test configuration $(\mathcal{X},\mathcal{A})$.%, the intersection numbers on the right hand side is given by pulling back the cohomology classes $\operatorname{pr}_1^*\operatorname{ch}_2(E)$ and $\operatorname{pr}_1^*c_1(E)$ (on trivial test configuration $X\times\mathbb{P}^1$) to the smooth test configuration $\mathcal{X}$.
\end{theorem}
\begin{proof}
	This is the corollary of (\ref{mytheorem1}) and \cite[Theorem 5.1]{sjostrom2018k}.
\end{proof}
\subsection{Proof of Theorem \ref{mytheorem2}}\label{subsec:8}
For the slope of the `Hermitian-Einstein' part $\operatorname{M}'$, we need to use another technique which is constructed in \cite{jonsson2022geodesicraysdonaldsonuhlenbeckyautheorem}. Suppose that $h_0$ is a Hermitian metric on $E$ and
\[
0 =: E_{m+1} \subset E_m \subset \dots \subset E_1 := E
\]
is a filtration of $E$ by holomorphic subbundles. Set $F_i=E_i/E_{i+1}$, we have a smooth orthogonal splitting $E=\bigoplus_{i=1}^mF_i$. Given any real numbers $\lambda_1>\cdots>\lambda_m$, we can use this smooth isomorphism to scale $h_0$ by $e^{t\lambda_i}$ on each $F_i$, which yields a smooth family $(h_t)_{t\geq 0}$ of Hermitian metrics. Under the orthogonal splitting, the Chern connection $h_t$ is given by (c.f. \cite[Lemma 3.2]{jonsson2022geodesicraysdonaldsonuhlenbeckyautheorem})\begin{align*}
	\begin{pmatrix}
		\beta_{11} & \beta_{12} & \beta_{13} & \cdots & \beta_{1m} \\
		e^{t(\lambda_2-\lambda_1)}\beta_{21} & \beta_{22} & \beta_{23} & \cdots & \beta_{2m} \\
		e^{t(\lambda_3-\lambda_1)}\beta_{31} & 	e^{t(\lambda_3-\lambda_2)}\beta_{32} & \beta_{33} & \cdots & \beta_{3m} \\
		\vdots & \vdots & \vdots & \vdots & \vdots \\
		e^{t(\lambda_m-\lambda_1)}\beta_{m1} & 	e^{t(\lambda_m-\lambda_2)}\beta_{m2} & 	e^{t(\lambda_m-\lambda_3)}\beta_{m3} & \cdots & \beta_{mm} 
	\end{pmatrix}
\end{align*}where $\beta_{ii}$ is the Chern connection form of the Hermitian metric induced by $(h_0)|_{F_i}$, and for $i > j$, $\beta_{ij}$ is a $\mathrm{Hom}(F_i, F_j)$-valued $(1,0)$-form whose $h_0$-adjoint gives $-\beta_{ji}$. Thus we can compute Donaldson's functional directly as follows (c.f. \cite[Theorem 3.3]{jonsson2022geodesicraysdonaldsonuhlenbeckyautheorem}):
\begin{align}\label{jonsson}
	\operatorname{M}^{\operatorname{Don}}_{\omega}(h_t,h_0) = 2\pi \sum_{i=1}^m \lambda_i \operatorname{rank}(F_i)(\mu_{F_i} - \mu_E)t - \sum_{1 \leqslant i < j \leqslant m} B_{ji}(1 - e^{-t(\lambda_i - \lambda_j)}),
\end{align}
where $B_{ji} = \int_X |\beta_{ji}|_{h_0}^2 \omega^n$ is a non-negative number.
\begin{proof}[\rm{\textbf{Proof of Theorem \ref{mytheorem2}}}]
	Note that the formula (\ref{jonsson}) holds for any K\"{a}hler form $\omega$. Thus along the ray $(\omega_t,h_t)$, where $\omega_t$ is any given smooth ray in $\mathcal{K}_{\check{I}}$, we have\begin{align*}
		\mathcal{M}^{\operatorname{HE}}(b_t,b_0)=2\alpha_1\operatorname{M}^{\operatorname{Don}}_{\omega_t}(h_t,h_0) &\ = 4\alpha_1\pi \sum_{i=1}^m \lambda_i \operatorname{rank}(F_i)(\mu_{F_i} - \mu_E)t \\&\ \  - 2\alpha_1\sum_{1 \leqslant i < j \leqslant m}\left(\int_X |\beta_{ji}|_{h_0}^2\omega^n_t\right)(1 - e^{-t(\lambda_i - \lambda_j)}).
	\end{align*}Since $\beta_{ji}$ is smooth and independent of $\omega_t$, the integral $\int_X |\beta_{ji}|_{h_0}^2\omega^n_t\leq Cn!\operatorname{vol}_\Omega$ i.e. uniformly bounded. The theorem follows when $t\to\infty$.
\end{proof}
\begin{remark}
	The asymptotic slope of $\operatorname{M}'$ may also be evaluated using the results of \cite{hashimoto2022quot}, provided that $X$ is polarized and $E(k) := E \otimes \mathcal{O}_X(k)$ is globally generated, with the smooth $\operatorname{Herm}^+(E)$-ray given by a one-parameter subgroup $\sigma: \mathbb{C}^* \to \operatorname{SL}(H^0(X,E(k))^*)$. Furthermore, if we relax the smooth condition of $\operatorname{Herm}^+(E)$-rays, we can get the subsheaf version of Theorem \ref{mytheorem2} by \cite[Theorem A]{jonsson2022geodesicraysdonaldsonuhlenbeckyautheorem}.
\end{remark}

\bibliographystyle{spmpsci.bst}
\bibliography{references}
\end{document}